\definecolor{cmyk}{cmyk}{.0,.4,.9,.5}								
\definecolor{orcidlogocol}{HTML}{A6CE39}
\tikzset{
  orcidlogo/.pic={
    \fill[orcidlogocol] svg{M256,128c0,70.7-57.3,128-128,128C57.3,256,0,198.7,0,128C0,57.3,57.3,0,128,0C198.7,0,256,57.3,256,128z};
    \fill[white] svg{M86.3,186.2H70.9V79.1h15.4v48.4V186.2z}
                 svg{M108.9,79.1h41.6c39.6,0,57,28.3,57,53.6c0,27.5-21.5,53.6-56.8,53.6h-41.8V79.1z M124.3,172.4h24.5c34.9,0,42.9-26.5,42.9-39.7c0-21.5-13.7-39.7-43.7-39.7h-23.7V172.4z}
                 svg{M88.7,56.8c0,5.5-4.5,10.1-10.1,10.1c-5.6,0-10.1-4.6-10.1-10.1c0-5.6,4.5-10.1,10.1-10.1C84.2,46.7,88.7,51.3,88.7,56.8z};
  }
}
\newcommand\orcidicon[1]{\href{https://orcid.org/#1}{\mbox{\scalerel*{
\begin{tikzpicture}[yscale=-1,transform shape]
\pic{orcidlogo};
\end{tikzpicture}
}{|}}}}
\newcommand{\Ug}{\mathfrak{U}}
\newcommand{\NN}{{\mathbb{N}}}
\newcommand{\As}{{\mathscr{A}}}\newcommand{\Bs}{{\mathscr{B}}}\newcommand{\Cs}{{\mathscr{C}}}
\newcommand{\Ds}{{\mathscr{D}}}
\newcommand{\Es}{{\mathscr{E}}}
\newcommand{\Ks}{{\mathscr{K}}} 
\newcommand{\Ms}{{\mathscr{M}}}\newcommand{\Ns}{{\mathscr{N}}}\newcommand{\Os}{{\mathscr{O}}}
\newcommand{\Qs}{{\mathscr{Q}}}\newcommand{\Rs}{{\mathscr{R}}}
\newcommand{\Us}{{\mathscr{U}}}
\newcommand{\Xs}{{\mathscr{X}}}
\DeclareFontFamily{U}{rsfs}{\skewchar\font127 }
\DeclareFontShape{U}{rsfs}{m}{n}{%
   <5> <6> rsfs5
   <7> rsfs7
   <8> <9> <10> <10.95> <12> <14.4> <17.28> <20.74> <24.88> rsfs10
}{}
\DeclareSymbolFont{rsfs}{U}{rsfs}{m}{n} 
\DeclareSymbolFontAlphabet{\scr}{rsfs}
\newcommand{\Kf}{\scr{K}} 
\newcommand{\Qf}{\scr{Q}}
\newcommand{\Xf}{\scr{X}}
\renewcommand{\emph}{\textbf} 										
\newcommand{\imp}{\Rightarrow}										
\newcommand{\st}{\ : \ }											
\newcommand{\dya}{\scaleobj{0.8}{\Join}}
\newcommand{\ddya}{\scaleobj{0.8}{\mathrlap{\hspace{0.4pt}\Join}\bigcirc}}
\newcommand{\Xddya}{\overleftrightarrow{\kern -2pt\Xf}_{\kern -2pt\As}^{\ddya}}
\newcommand{\Xdya}{\overleftrightarrow{\kern -2pt\Xf}_{\kern -2pt\As}^{\dya}}
\newcommand{\dpartial}{\partial\kern-4.5pt /}
\newcommand{\yad}{\ \scaleobj{0.6}{\pmb{|}} \kern-2.5pt \scaleobj{0.8}{\Join}}
\newcommand{\hlink}[2]{\href{#1}{\texttt{#2}}} 						
\newcommand{\xqedhere}[2]{%
  \rlap{\hbox to#1{\hfil\llap{\ensuremath{#2}}}}}
\theoremstyle{plain}
\newtheorem{theorem}{Theorem}[section]							
\newtheorem{corollary}[theorem]{Corollary}
\newtheorem{lemma}[theorem]{Lemma}
\newtheorem{definition}[theorem]{Definition}
\theoremstyle{definition}
\numberwithin{equation}{section}  									
\title{\textbf{Involutive Weak Cubical $\omega$-categories}
}
\author{\normalsize  
\orcidicon{0000-0002-3891-7717} Paratat Bejrakarbum$^a$ \quad
\orcidicon{0000-0002-1387-9283} Paolo Bertozzini$^{b} \footnote{Currently unaffiliated independent reseacher based in Bangkok.}$ \quad  
\orcidicon{} Supaporn Theesoongnern$^c$ \footnote{
Corresponding ``first'' author. 
Notice that, contrary to the published paper, the authors appear here in the standard alphabetical order. 
}   
\\  
\normalsize \textit{Department of Mathematics and Statistics, Faculty of Science and Technology,}
\\
\normalsize \textit{Thammasat University, Pathumthani 12121, Thailand}
\\
\normalsize e-mail: 
\ 
$^a$ \texttt{paratat@tu.ac.th}
\quad 
$^b$ \texttt{paolo.th@gmail.com} 
\quad 
$^c$ \texttt{boomsup.t@gmail.com}
}
\date{\normalsize{
revised version: 
02 April 2025 \footnote{
This is a reformatted version, only for arXiv purposes, of a paper accepted for publication in \textit{Science and Technology Asia} 30(2). 
}  
\quad 
submitted version: 12 March 2024
\quad 
started: 17 February 2023 
}}
\begin{document}

\maketitle


\begin{abstract} \noindent 
We investigate the notion of involutive weak cubical $\omega$-categories via Penon's approach: as algebras for the monad induced by the free involutive strict $\omega$-category functor on cubical $\omega$-sets. 
A few examples of involutive weak cubical $\omega$-categories are provided. 

\medskip

\noindent
\emph{Keywords:} 
Higher Category, Involutive Category, Monad.

\smallskip

\noindent
\emph{MCS-2020:} 
					18N65, 
					18N70, 
					18M40, 
					18N30, 
					18N99. 
\end{abstract}

\tableofcontents

\section{Introduction and Motivation}

Motivated by research in algebraic topology, category theory, starting from~\cite{EM45}, developed into an independent mathematical subject. Although higher categories had been already implicit in the definition of natural trasformations, the study of $n$-categories (both in their globular and cubical versions) was initiated in~\cite{Eh65}. Strict $\omega$-categories had been conjectured by J.Roberts (as later reported in \cite{Ro79}) and independently introduced and studied by~\cite{BH77}. 

\medskip 

The development of weak higher category theory (somehow implicit in the definition of monoidal category) probably started with the definition of bicategory in~\cite{Be67} and $n$-category in~\cite{St72} and is now a quite active area of research (see for example~\cite{ChLa04,Lei01,Lei04}). 

\medskip 

Algebraic approaches to the definition of weak globular higher-categories have been developed by~\cite{Ba98}, \cite{Pe99} and \cite{Lei04}. A similar study for the weak cubical higher categories, using Penon's technique, has been carried on by C.Kachour in several important recent works~\cite{Ka22}

\medskip 

The notion of involution (duality) in category theory has a relatively ``involved'' history with concepts independently introduced by several authors in different contexts and generality (see~\cite{BS09} and \cite[section~4]{BCLS20} for some bibliographical details); a recent systematic treatment of the topic is contained in~\cite{Ya20} where further references can be found. 

\medskip 

Here we are specifically interested in a (vertical) categorification of the usual $*$-operation in operator algebras: the ``$*$-categories'' considered in~\cite{GLR85}, \cite{Mi02} and the ``dagger categories''  axiomatized in~\cite{Se05} and utilized in~\cite{AC04}.  

\medskip 

Strict involutive globular $n$-categories have been considered in~\cite{BCLS20}. 
Weak involutive globular $\omega$-categories have been introduced, using Penon's contractions in~\cite{Be16,BeBe17} and, in~\cite{Be23,BeBe23}, using Leinster's definition of globular $\omega$-categories. 

\medskip 

In the present work, we aim at a sufficiently general definition of \textit{involutive weak cubical $\omega$-category} following the C.Kachour algebraic notion of weak cubical Penon $\omega$-category. 

\bigskip 

The organization of the paper is the following. 

\medskip 

After this introduction, in section~\ref{sec: strict}, we approach the study of strict involutive cubical $\omega$-categories: 
\begin{itemize}
\item 
following the ideas of~\cite{BH77} and~\cite{Ka22},   
suitably general notions of cubical $\omega$-quivers and cubical $\omega$-sets are introduced in definitions~\ref{def: quiver} and~\ref{def: set},  
\item 
self-dualities on cubical $\omega$-sets and the algebraic properties of cubical involutions are axiomatized, following the double category case in~\cite{BCDM14}, in definitions~\ref{def: operations} and~\ref{def: axinv}, 
\end{itemize}
  
The proof that the free strict involutive cubical $\omega$-category of a cubical $\omega$-set exists is postponed to section~\ref{sec: weak} in lemmata~\ref{lem: free-magma} and~\ref{lem: free-cat} and hence the associated monad is constructed in corollary~\ref{cor: monad}.  

\medskip 

In section~\ref{sec: weak} we deal with the involutive version of Penon-Kachour weak cubical $\omega$-categories: 
\begin{itemize}
\item 
we introduce in definition~\ref{def: contraction} a notion of Penon-Kachour contraction for our cubical $\omega$-sets,   
\item 
in lemma~\ref{lem: contraction} it is proved that the free contracted Penon-Kachour cubical involutive $\omega$-contraction exists and hence in theorem~\ref{th: weak} we show that we have an associated monad,  
\item 
in definition~\ref{def: weak} weak involutive cubical $\omega$-categories are introduced (similarly to Kachour for cubical groupoids) as algebras for the previous monad, 
\item 
some examples of such weak involutive cubical $\omega$-categories are suggested in subsection~\ref{ex: weak}. 
\end{itemize}
 
Finally in a brief outlook section~\ref{sec: outlook} we examine some possible future direction of development of this work. 

\section{Strict (Involutive) Cubical $\omega$-categories}\label{sec: strict}

The first definition only formalizes the idea that ``$n$-dimensional cells'' $x\in\Qs^n$ are equipped with a family of ``source/target'' $(n-1)$-dimensional cells, indexed as the ``faces of an $n$-dimensional hypercube''. The sets $D$ with cardinality $|D|=n$ indicate the possible ``directions'' of the $n$-dimensional cells, where the ``directions'' are selected via subsets (of cardinality $n$) in the infinite countable set $\NN_0$. In this generality, morphisms are just a countable family of ``dimension-preserving'' maps compatible with sources and targets. 
\begin{definition}\label{def: quiver}
An \emph{cubical $\omega$-quiver} is a family $\left(\Qs^n_{D-\{d\}}\xleftarrow{s^n_{D,d},\ t^n_{D,d}}\Qs^{n+1}_D\right)_{n\in\NN}$ of \emph{source maps} $s^n_{D,d}$ and \emph{target maps} $t^n_{D,d}$
indexed by $n\in\NN$, by any $D\subset \NN_0$ with cardinality $|D|=n+1$ and any $d\in D$. 

\medskip 

A \emph{morphism of cubical $\omega$-quivers} is a family $\Qs^n_{D}\xrightarrow{\phi^n_{D}}\hat{\Qs}^n_{D}$ indexed by $n\in\NN$ and $D\subset\NN$ with $|D|=n$, such that $\hat{s}^{n}_{D,d}\circ\phi^{n+1}_{D}=\phi^{n}_{D-\{d\}}\circ s^{n}_{D,d}$ and $\hat{t}^{n}_{D,d}\circ\phi^{n+1}_{D}=\phi^{n}_{D-\{d\}}\circ t^{n}_{D,d}$, for all $n\in\NN$, $D\subset\NN_0$ with $|D|=n$ and $d\in D$. 
\end{definition}

The actual $n$-dimensional ``cubical shape'' of $n$-cells is specified by the following axioms. 
\begin{definition}\label{def: set}
A \emph{cubical $\omega$-set} is a cubical $\omega$-quiver $\left(\Qs^n_{D-\{d\}}\xleftarrow{s^n_{D,d},\ t^n_{D,d}}\Qs^{n+1}_D\right)_{n\in\NN}$ satisfying the \emph{cubical axioms}:
\begin{gather*}
\forall n\in \NN \st n\geq 2, \quad \forall D\subset \NN_0\st |D|=n, \quad \forall d\neq e\in D, 
\\  
s^{n-2}_{D-\{d,e\}}\circ s^{n-1}_{D-\{d\}} = s^{n-2}_{D-\{d,e\}}\circ s^{n-1}_{D-\{e\}}, 
\quad \quad 
t^{n-2}_{D-\{d,e\}}\circ t^{n-1}_{D-\{d\}} = t^{n-2}_{D-\{d,e\}}\circ t^{n-1}_{D-\{e\}}, 
\\
s^{n-2}_{D-\{d,e\}}\circ t^{n-1}_{D-\{d\}} = t^{n-2}_{D-\{d,e\}}\circ s^{n-1}_{D-\{e\}}, 
\quad \quad 
t^{n-2}_{D-\{d,e\}}\circ s^{n-1}_{D-\{d\}} = s^{n-2}_{D-\{d,e\}}\circ t^{n-1}_{D-\{e\}}. 
\end{gather*}

A \emph{morphism of cubical $\omega$-sets} is just a morphism of underlying cubical $\omega$-quivers.
\end{definition}
A pictorial description of cubical $n$-cells, for four cases $n=0,D=\varnothing$; $n=1, D=\{1\}$; $n=2, D=\{1,2\}$; $n=3, D=\{1,2,3\}$ respectively, is here below: 
\begin{align*}
&\vcenter{\xymatrix{
\bullet 
}} 
& 
&\vcenter{\xymatrix{
\bullet \ar@{-}[r] & \bullet
}} 
& 
&\vcenter{\xymatrix{
\bullet \ar@{-}[r] \ar@{-}[d] & \bullet \ar@{-}[d]
\\
\bullet \ar@{-}[r] & \bullet
}} 
& 
&\vcenter{
\xymatrix{
& \bullet \ar@{-}[rr] \ar@{-}[dl] & & \bullet \ar@{-}[dl] \ar@{-}[dd]
\\
\bullet \ar@{-}[rr] \ar@{-}[dd] & & \bullet \ar@{-}[dd] & 
\\
& \bullet \ar@{.}[rr] \ar@{.}[dl] \ar@{.}[uu] & & \bullet \ar@{-}[dl]
\\
\bullet \ar@{-}[rr] & & \bullet & 
}}
\end{align*}

Next we introduce three families of (binary, nullary, unary) operations on cubical $n$-cells. 
\begin{definition} \label{def: operations}
Given a cubical $\omega$-set $\Qs$, we can introduce on it the following operations: 
\begin{itemize}
\item 
\emph{binary compositions} 
\begin{equation*}
\circ^n_{D,d}:\Qs^n_D \times_{\Qs^{n-1}_{D-\{d\}}} \Qs^n_D\to \Qs^n_D, 
\quad \forall n\in \NN_0 \quad \forall d\in D\subset \NN_0 \st |D|=n,
\end{equation*}
where $\Qs^n_D \times_{\Qs^{n-1}_{D-\{d\}}} \Qs^n_D:=\Big\{(x,y) \ | \ s^{n-1}_{D,d}(x)=t^{n-1}_{D,d}(y) \Big\}$ and we assume: 
\begin{gather*}
s^{n-1}_{D,d}(x\circ^n_{D,d} y)= s^{n-1}_{D,d}(y), \quad \quad 
t^{n-1}_{D,d}(x\circ^n_{D,d} y)= t^{n-1}_{D,d}(x),
\\ 
s^{n-1}_{D,e}(x\circ^n_{D,d} y)=s^{n-1}_{D,e}(x)\circ^{n-1}_{D-\{e\},d}s^{n-1}_{D,e}(y), 
\quad \quad 
t^{n-1}_{D,e}(x\circ^n_{D,d} y)=t^{n-1}_{D,e}(x)\circ^{n-1}_{D-\{e\},d}t^{n-1}_{D,e}(y), 
\quad \quad \forall e\neq d. 
\end{gather*}
\begin{equation*}
\vcenter{\xymatrix{
& \ar[rr]^{s^{n-1}_{D,e}(y)} \ar[d]_{s^{n-1}_{D,d}(y)} \ar@{:>}[drr]|{y} & & \ar[rr]^{s^{n-1}_{D,e}(x)} \ar[d]^{}_{} \ar@{:>}[drr]|{x} & & \ar[d]^{t^{n-1}_{D,d}(x)} 
\\
& \ar[rr]_{t^{n-1}_{D,e}(y)} & & \ar[rr]_{t^{n-1}_{D,e}(x)} & &
}}
\mapsto 
\vcenter{\xymatrix{
\ar[d]_{s^{n-1}_{D,d}(y)} \ar[rr]^{s^{n-1}_{D,e}(x)\ \circ^{n-1}_{D-\{e\},d}\ s^{n-1}_{D,e}(y)} \ar@{:>}[drr]|{x\ \circ^n_{D,d}\ y} & & \ar[d]^{t^{n-1}_{D,d}(x).} 
\\
\ar[rr]_{t^{n-1}_{D,e}(x)\ \circ^{n-1}_{D-\{e\},d}\ t^{n-1}_{D,e}(y)} & & 
}}
\end{equation*}
\item 
\emph{nullary reflectors}
\begin{equation*} 
\iota^n_{D,d}: \Qs^{n-1}_{D-\{d\}}\to \Qs^n_{D}, 
\quad \forall n\in \NN_0, \quad \forall d\in D\subset \NN_0\st |D|=n, 
\end{equation*}
where the following structural axioms are assumed: 
\begin{gather*}
s^{n-1}_{D,d} (\iota^n_{D,d}(x))=x=t^{n-1}_{D,d} (\iota^n_{D,d}(x)), 
\\
s^{n-1}_{D,e} (\iota^n_{D,d}(x))=\iota^{n-1}_{D,d}(s^{n-2}_{D-\{d\},e}(x)),
\quad \quad 
t^{n-1}_{D,e} (\iota^n_{D,d}(x))=\iota^{n-1}_{D,d}(t^{n-2}_{D-\{d\},e}(x)), \quad \quad \forall e\neq d. 
\end{gather*}
\begin{equation*}
\vcenter{\xymatrix{
{a} \ar[d]_{x} 
\\
{b} }
}
\quad 
\mapsto
\quad
\vcenter{\xymatrix{
\ar@{.>}[rr]^{\iota^{n-1}_{D,d}(a)} \ar[d]_{x} \ar@{:>}[drr]|{\iota^n_{D,d}(x)} & & \ar[d]^{x} 
\\
\ar@{.>}[rr]_{\iota^{n-1}_{D,d}(b)} & & 
}}, \quad \text{where} \quad a:=s^{n-2}_{D-\{d\},e}(x), \ b:=t^{n-2}_{D-\{d\},e}(x). 
\end{equation*}
\item
\emph{unary self-dualities}
\begin{equation*}
*^n_{D,d}:\Qs^n_D\to \Qs^n_D, 
\quad \forall n\in \NN_0 \quad \forall d\in D\subset \NN_0 \st |D|=n, 
\end{equation*}
where we assume the following structural axioms: 
\begin{gather*}
s^{n-1}_{D,e}(x^{*^n_{D,d}})=(s^{n-1}_{D,e}(x))^{*^{n-1}_{D-\{e\},d}}, \quad \quad 
t^{n-1}_{D,e}(x^{*^n_{D,d}})=(t^{n-1}_{D,e}(x))^{*^{n-1}_{D-\{e\},d}}, \quad \quad \forall e\neq d, 
\\
s^{n-1}_{D,d}(x^{*^n_{D,d}})=t^{n-1}_{D,d}(x),
\quad\quad
t^{n-1}_{D,d}(x^{*^n_{D,d}})=s^{n-1}_{D,d}(x). 
\end{gather*}
\begin{equation*}
\vcenter{\xymatrix{
\ar@{:>}[drr]|{x} \ar[rr]^{s_{D,e}^{n-1}(x)} \ar[d]_{s_{D,d}^{n-1}(x)} & & \ar[d]^{t_{D,d}^{n-1}(x)} 
\\
\ar[rr]_{t_{D,e}^{n-1}(x)} & & 
}
}
\quad 
\mapsto
\quad 
\vcenter{\xymatrix{
\ar[d]_{s_{D,d}^{n-1}(x)} & & \ar@{:>}[dll]|{x^{*^n_{D,d}}} \ar[ll]_{(s_{D,e}^{n-1}(x))^{*^{n-1}_{D-\{e\},d}}} \ar[d]^{t_{D,d}^{n-1}(x).}
\\ 
& & \ar[ll]^{(t_{D,e}^{n-1}(x))^{*^{n-1}_{D-\{e\},d}}}
}}
\end{equation*}
\end{itemize}
A \emph{reflective cubical $\omega$-set} is a cubical $\omega$-set equipped with the reflectors as above; a \emph{self-dual cubical $\omega$-set} is a cubical $\omega$-set equipped with the previous self-dualities. 
A \emph{cubical $\omega$-magma} is a cubical $\omega$-set equipped with the above defined binary compositions; a \emph{reflective (self-dual) cubical $\omega$-magma} is a cubical $\omega$-set equipped with reflectors (self-dualities) and compositions. 

\medskip 

A \emph{morphism of reflective cubical $\omega$-sets} is a morphism $(\phi^n_D)_{n\in\NN, \ D\subset\NN_0 \st |D|=n}$ of cubical $\omega$-sets that also satisfies: 
$\phi^n_D\circ\iota^n_{D,d}=\hat{\iota}^n_{D,d}\circ\phi^{n-1}_{D-\{d\}}$, for all $n\in\NN_0$, $D\subset \NN_0$ with $|D|=n$, $d\in D$.
 
\medskip 

A \emph{morphism of self-dual cubical $\omega$-sets} is a morphism $(\phi^n_D)_{n\in\NN, \ D\subset\NN_0 \st |D|=n}$ of cubical $\omega$-sets that also satisfies: 
$\phi^n_D\circ *^n_{D,d}=\hat{*}^n_{D,d}\circ\phi^{n}_{D}$, for all $n\in\NN$, $D\subset \NN_0$ with $|D|=n$, $d\in D$.

\medskip 

A \emph{morphism of cubical $\omega$-magmas} is a morphism $(\phi^n_D)_{n\in\NN, \ D\subset\NN_0 \st |D|=n}$ of cubical $\omega$-sets that also satisfies: 
$\phi^n_D (x\circ^n_{D,d}y)=\phi^n_D(x)\hat{\circ}^n_{D,d}\phi^{n}_{D}(y)$, for all $n\in\NN_0$, $D\subset \NN_0$ with $|D|=n$, $d\in D$ and $(x,y)\in\Qs^n_D\times_{\Qs^{n-1}_D-\{d\}}\Qs^n_D$.
\end{definition}

To obtain strict cubical $\omega$-categories we further impose the usual algebraic axioms. 
\begin{definition}
A \emph{strict cubical $\omega$-category} is a cubical reflective $\omega$-magma such that the following algebraic axioms are satisfied: 
\begin{itemize}
\item
\emph{associativity of compositions}: for all $n\in \NN_0$, for all $D\subset \NN_0$ with $|D|=n$ and for all $d\in D$: 
\begin{equation*}
x\circ^n_{D,d} (y \circ^n_{D,d} z) = (x\circ^n_{D,d} y)  \circ^n_{D,d} z, 
\quad \quad \forall (x,y,z)\in \Qs^n_D\times_{\Qs^{n-1}_{D-\{d\}}} \Qs^n_D\times_{\Qs^{n-1}_{D-\{d\}}} \Qs^n_D, 
\end{equation*}
\item 
\emph{unitality of compositions}: for all $n\in \NN_0$, for all $D\subset \NN_0$ with $|D|=n$ and for all $d\in D$: 
\begin{equation*}
x\circ^n_{D,d}\iota^n_{D,d}(s^{n-1}_{D,d} (x))=x=\iota^n_{D,d} (t^{n-1}_{D,d} (x))\circ^n_{D,d} x, \quad \forall x\in \Qs^n_D,
\end{equation*}
\item 
\emph{functoriality of identities}: for all $n\in \NN_0-\{1\}$, for all $D\subset \NN_0$ with $|D|=n$ and for all $e\neq d \in D$:  
\begin{equation*}
\iota^n_{D,d} (x\circ^{n-1}_{D-\{d\},e} y)=\iota^n_{D,d} (x)\circ^n_{D,e} \iota^n_{D,d} (y), \quad \quad \forall (x,y)\in \Qs^{n-1}_D\times_{\Qs^{n-2}_{D-\{d\}}} \Qs^{n-1}_D,
\end{equation*}
\item
\emph{exchange property}: for all $n\in \NN_0$, for all $D\subset \NN_0$ with $|D|=n$ and for all $e\neq f \in D$: 
\begin{equation*}
(x\circ^n_{D,e} y) \circ^n_{D,f} (w\circ^n_{D,e} z) = (x\circ^n_{D,f} w) \circ^n_{D,e} (y\circ^n_{D,f} z), 
\quad \quad \forall (x,y),(w,x)\in \Qs^n_D\times_{\Qs^{n-1}_{D-\{e\}}} \Qs^n_D, \quad 
  (x,w),(y,z)\in \Qs^n_D\times_{\Qs^{n-1}_{D-\{f\}}} \Qs^n_D. 
\end{equation*}
\end{itemize}
A \emph{covariant functor between cubical $\omega$-categories} is just a morphism of reflective cubical $\omega$-magmas. 
\end{definition}

\begin{definition}\label{def: axinv}
A \emph{strict involutive cubical $\omega$-category} further requires these algebraic axioms:
\begin{itemize}
\item 
\emph{involutivity}: for all $\in \NN_0$, for all $D\subset\NN_0$ with $|D|=n$ and $d\in D$, 
\begin{equation*}
(x^{*^n_{D,d}})^{*^n_{D,d}}=x, 
\quad \quad \forall x\in \Qs^n_D, 
\end{equation*}
\item 
\emph{commutativity of involutions}: for all $\in \NN_0$, for all $D\subset\NN_0$ with $|D|=n$,
\begin{equation*}
(x^{*^n_{D,e}})^{*^n_{D,f}}=(x^{*^n_{D,f}})^{*^n_{D,e}}, 
\quad \quad \forall x\in \Qs^n_D, \quad \forall e\neq f\in D, 
\end{equation*}
\item
\emph{functoriality of involutions} for all $\in \NN_0$, for all $D\subset\NN_0$ with $|D|=n$,
\begin{gather*}
(x\circ^n_{D,d} y)^{*^n_{D,d}}=(y^{*^n_{D,d}})\circ^n_{D,d} (x^{*^n_{D,d}}), 
\quad \quad \forall d\in D, 
\\
(x\circ^n_{D,d} y)^{*^n_{D,e}}=(x^{*^n_{D,e}})\circ^n_{D,d} (y^{*^n_{D,e}}), 
\quad \quad \forall d\neq e\in D,  
\end{gather*}
\item
\emph{Hermitianity of identities}: for all $\in \NN_0$, for all $D\subset\NN_0$ with $|D|=n$,
\begin{gather*}
(\iota^n_{D,d}(x))^{*^n_{D,d}}=\iota^n_{D,d}(x), \forall x\in \Qs^n_D
\\
(\iota^n_{D,d}(x))^{*^n_{D,e}}=\iota^n_{D,d}(x^{*^n_{D,e}}), 
\quad \quad \forall x\in \Qs^n_D, \quad \quad \forall d\neq e\in D
\end{gather*}
\end{itemize}
A \emph{covariant functor between involutive cubical $\omega$-categories} is a morphism of self-dual reflective cubical $\omega$-magmas. 
\end{definition}

\section{Penon Kachour Weak (Involutive) Cubical $\omega$-categories}\label{sec: weak}

We proceed to define Penon-Kachour contractions in the cubical setting. 

\begin{definition}\label{def: contraction}
Given a cubical (self-dual) reflective $\omega$-magma $\Ms$, a strict cubical (involutive) $\omega$-category $\Cs$ and a morphism of cubical (self-dual) reflective $\omega$-magmas $\Ms\xrightarrow{\pi}\Cs$, a \emph{Penon-Kachour $\pi$-contraction} is a family of maps 
$\kappa^n_{D,d}: 
\Ms^{n-1}_D(\pi)\to\Ms^{n}_{D\cup\{d\}}$, for all $n\in\NN_0$, $D\subset\NN_0$ with $|D|=n$ and all $d\in\NN_0-D$ such that: 
\begin{gather*}
\Ms^{n-1}_D(\pi):=\left\{ (x,y)\in\Ms^{n-1}_D\times\Ms^{n-1}_D \ | \ \pi(x)=\pi(y)\right\}, 
\\ 
s^{n-1}_{D\cup\{d\},d}(\kappa^n_{D,d}(x,y))=x, \quad t^{n-1}_{D\cup\{d\},d}(\kappa^n_{D,d}(x,y))=y,
\\
s^{n-1}_{D\cup\{d\},e}(\kappa^n_{D,d}(x,y))=
\kappa^{n-1}_{D-\{e\},d}\left(s^{n-2}_{D,e}(x),
s^{n-2}_{D,e}(y)\right), 
\quad 
t^{n-1}_{D\cup\{d\},e}(\kappa^n_{D,d}(x,y))=
\kappa^{n-1}_{D-\{e\},d}\left(t^{n-2}_{D,e}(x),
t^{n-2}_{D,e}(y)\right), 
\ \forall e\in D, 
\\
\pi^{n}_{D\cup\{d\}}(\kappa^n_{D,d}(x,y))=\iota^{n}_{D\cup{d},d}(\pi^{n-1}_{D}(x)) =\iota^{n}_{D\cup{d},d}(\pi^{n-1}_{D}(y)), 
\\
x=y\in\Ms^{n-1}_D \imp \kappa^n_{D,d}(x,y)=\iota^n_{D,d}(x),
\end{gather*}
\begin{equation*}
\xymatrix{
\ar@<4ex>[rr]^x \ar@<2ex>[rr]_y & \ar@{|->}[d]_{\pi} & 
\\
\ar@<-1ex>[rr]_{\pi(x)=\pi(y)}  & & 
}
\quad \quad  \quad \quad 
\xymatrix{
\ar[rr]^x \ar@{:>}[drr]|{\kappa^n_{D,d}(x,y)}\ar[d]_{\kappa^{n-1}_{D-\{e\},d}(s^{n-2}_{D,e}(x),s^{n-2}_{D,e}(y))}  & & \ar[d]^{\kappa^{n-1}_{D-\{e\},d}(t^{n-2}_{D,e}(x),t^{n-2}_{D,e}(y)).} 
\\ 
\ar[rr]_y & & 
\\
& & 
}
\end{equation*}
A \emph{morphism of cubical Penon-Kachour contractions} $(\Ms\xrightarrow{\pi}\Cs,\kappa)\xrightarrow{(\phi,\Phi)} (\hat{\Ms}\xrightarrow{\hat{\pi}}\hat{\Cs},\hat{\kappa})$ is given by a covariant morphism of reflexive (self-dual) $\omega$-magmas $\Ms\xrightarrow{\Phi}\hat{\Ms}$, a covariant (involutive) functor $\Cs\xrightarrow{\phi}\hat{\Cs}$ such that: 
\begin{gather*}
\hat{\pi}\circ\Phi=\phi\circ\pi, 
\quad\quad 
\Phi\circ\kappa=\hat{\kappa}\circ\phi. 
\end{gather*}
\end{definition}

With some abuse of notation, we denote by $\Ug$ forgetful functors, without explicitly indicating the categories (that will be clear from the context). 

\begin{definition}\label{def: free}
A \emph{free (self-dual, reflective) cubical $\omega$-magma over a cubical $\omega$-set $\Qs$} is a morphism of cubical $\omega$-sets $\Qs\xrightarrow{\eta}\Ug(\Ms(\Qs))$, into a (self-dual, reflective) cubical $\omega$-magma $\Ms(\Qs)$, such that the following universal factorization property holds: for any other morphism of cubical $\omega$-sets $\Qs\xrightarrow{\phi}\Ug(\Ms)$ into another (self-dual, reflective) cubical $\omega$-magma, there exists a unique morphism of (self-dual, reflective) $\omega$-magmas $\Ms(\Qs)\xrightarrow{\hat{\phi}}\Ms$ such that $\phi=\Ug(\hat{\phi})\circ\eta$. 

\medskip 

A \emph{free (involutive) cubical $\omega$-category over a cubical $\omega$-set $\Qs$} is a morphism of cubical $\omega$-sets $\Qs\xrightarrow{\eta}\Ug(\Cs(\Qs))$, into an (involutive) cubical $\omega$-category $\Cs(\Qs)$, such that the following universal factorization property holds: for any other morphism of cubical $\omega$-sets $\Qs\xrightarrow{\phi}\Us(\Cs)$ into another (involutive) cubical $\omega$-category, there exists a unique morphism of (involutive) $\omega$-categories $\Cs(\Qs)\xrightarrow{\hat{\phi}}\Cs$ such that $\phi=\Ug(\hat{\phi})\circ\eta$. 

\medskip 

A \emph{free (self-dual) cubical Penon-Kachour $\omega$-contraction over a cubical $\omega$-set $\Qs$} is a morphism of cubical $\omega$-sets $\Qs\xrightarrow{\eta}\Ug(\Ms)$ into the underlying cubical $\omega$-set $\Ug(\Ms)$ of the magma of a (self-dual) Penon-Kachour contraction $(\Ms\xrightarrow{\pi}\Cs,\kappa)$, such that the following universal factorization property holds: for any other morphism $\Qs\xrightarrow{\phi}\Ug(\hat{\Ms})$ of cubical $\omega$-sets  into the underlying cubical $\omega$-set $\Ug(\hat{\Ms})$ of the magma of another (self-dual) Penon-Kachour contraction $(\hat{\Ms}\xrightarrow{\hat{\pi}}\hat{\Cs},\hat{\kappa})$, there exists a unique morphism of (self-dual) Penon-Kachour contractions 
$(\Ms\xrightarrow{\pi}\Cs,\kappa)\xrightarrow{(\hat{\phi},\hat{\Phi})} (\hat{\Ms}\xrightarrow{\hat{\pi}}\hat{\Cs},\hat{\kappa})$ such that $\phi=\Ug((\hat{\phi},\hat{\Phi}))\circ\eta$. 
\end{definition}

The uniqueness of free structures, up to a unique isomorphism compatible with the universal factorization property, is assured from the definition. The existence is proved in lemma~\ref{lem: free-magma} below. 

\begin{lemma}\label{lem: free-magma}
There exists a free self-dual reflective cubical $\omega$-magma over a cubical $\omega$-set $\Qs$. 
\end{lemma}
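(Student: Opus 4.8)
The plan is to realize the free self-dual reflective cubical $\omega$-magma $\Ms(\Qs)$ as a \emph{term algebra}: its cells are the formal expressions built from the generators in $\Qs$ by iterated application of the three operations of Definition~\ref{def: operations}, with the source and target maps dictated by the structural axioms recorded there. Concretely, I would proceed by induction on the dimension $n$. For $n=0$ the only admissible index set is $D=\varnothing$ and no operation produces a $0$-cell, so I set $\Ms^0_\varnothing:=\Qs^0_\varnothing$. Assuming that the magmas $\Ms^m_{D'}$, together with their face maps $s,t$ and their operations $\circ,\iota,*$, have been constructed for all $m\le n-1$ and already satisfy the cubical axioms of Definition~\ref{def: set} and the structural axioms of Definition~\ref{def: operations}, I define $\Ms^n_D$, for each $D$ with $|D|=n$, as the smallest set containing a copy $\eta^n_D(\Qs^n_D)$ of the generators and closed under reflectors $\iota^n_{D,d}(w)$ for $w\in\Ms^{n-1}_{D-\{d\}}$, self-dualities $u^{*^n_{D,d}}$ for $u\in\Ms^n_D$, and compositions $u\circ^n_{D,d}v$ for composable pairs. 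Simultaneously I define $s^{n-1}_{D,d},t^{n-1}_{D,d}:\Ms^n_D\to\Ms^{n-1}_{D-\{d\}}$ by recursion on the term structure, reading the defining clauses directly off the equalities of Definition~\ref{def: operations}.

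The only partial operation is composition, so the sole well-definedness issue is whether the recursive clause $s^{n-1}_{D,e}(u\circ^n_{D,d}v)=s^{n-1}_{D,e}(u)\circ^{n-1}_{D-\{e\},d}s^{n-1}_{D,e}(v)$ (and its three companions for $t$ and for the two directions) really lands inside the already-constructed dimension-$(n-1)$ magma, i.e.\ whether the faces of a composable pair are again composable. This is the heart of the argument and the step I expect to be the main obstacle, and it is exactly where the cubical axioms are used. Explicitly, if $s^{n-1}_{D,d}(u)=t^{n-1}_{D,d}(v)$, so that $u\circ^n_{D,d}v$ is formed, then for $e\neq d$ one must check that $s^{n-2}_{D-\{e\},d}(s^{n-1}_{D,e}(u))=t^{n-2}_{D-\{e\},d}(s^{n-1}_{D,e}(v))$. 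The mixed cubical relations of Definition~\ref{def: set} rewrite the left-hand side as $s^{n-2}_{D-\{d\},e}(s^{n-1}_{D,d}(u))$ and the right-hand side as $s^{n-2}_{D-\{d\},e}(t^{n-1}_{D,d}(v))$, and these coincide upon applying $s^{n-2}_{D-\{d\},e}$ to the composability hypothesis $s^{n-1}_{D,d}(u)=t^{n-1}_{D,d}(v)$. The analogous computations for $t$-faces, and for reflectors and self-dualities (which, being total, raise no partiality issue), go through in the same way.

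Next I would verify that $\Ms(\Qs)$ is itself a cubical $\omega$-set, that is, that its face maps satisfy the four cubical identities of Definition~\ref{def: set}. This is a straightforward induction on term structure: the identities hold on generators because $\Qs$ is a cubical $\omega$-set, and each inductive step for $\iota$, $*$ and $\circ$ follows by combining the corresponding structural axiom of Definition~\ref{def: operations} with the cubical identities already available at dimension $n-1$. By construction the inclusion $\eta:\Qs\to\Ug(\Ms(\Qs))$ commutes with $s$ and $t$, hence is a morphism of cubical $\omega$-sets, and the operations $\circ,\iota,*$ equip $\Ms(\Qs)$ with the structure of a self-dual reflective cubical $\omega$-magma satisfying all the structural axioms.

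Finally I would establish the universal property of Definition~\ref{def: free}. Given a self-dual reflective cubical $\omega$-magma $\Ms$ and a morphism of cubical $\omega$-sets $\phi:\Qs\to\Ug(\Ms)$, I define $\hat{\phi}:\Ms(\Qs)\to\Ms$ by recursion on terms, setting $\hat{\phi}(\eta(a)):=\phi(a)$ and decreeing that $\hat{\phi}$ commute with $\iota$, with $*$ and with $\circ$. An induction on term structure shows that $\hat{\phi}$ preserves the face maps, so that it is a morphism of cubical $\omega$-sets (using that $\phi$ is one on generators), and that the structural axioms in $\Ms$ force $\hat{\phi}$ to respect composability, so that $\hat{\phi}(u)\circ\hat{\phi}(v)$ is defined whenever $u\circ v$ is; hence $\hat{\phi}$ is a morphism of self-dual reflective cubical $\omega$-magmas with $\Ug(\hat{\phi})\circ\eta=\phi$. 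Uniqueness is immediate, since any such morphism must agree with $\phi$ on generators and commute with the three operations, and these requirements determine it on every term. Thus all verifications beyond the composability preservation of the second paragraph reduce to bookkeeping in the indices $D,d,e$ guided by Definitions~\ref{def: set} and~\ref{def: operations}.
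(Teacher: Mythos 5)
Your proposal is correct and is essentially the paper's own proof: the paper builds $\Ms(\Qs)$ by the same recursion on dimension, realizing terms as tagged tuples $(x,d)$, $(x,\gamma_d)$, $(x,d,y)$ stratified by the number of concatenations and dualizations, which is exactly your term algebra described as the smallest set containing the generators and closed under the three operations, and it likewise obtains the universal property from the unique recursive extension of $\phi$ to terms. Your explicit verification, via the mixed cubical identities of Definition~\ref{def: set}, that the $e$-faces of a $d$-composable pair remain $d$-composable (and your induction confirming that $\Ms(\Qs)$ satisfies the cubical axioms) are left implicit in the paper, so if anything your write-up is slightly more careful on the well-definedness points.
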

\begin{proof}
The following proof follows the recursive construction strategy in~\cite[proposition~3.1]{BeBe17}, also recalled in~\cite[proposition~3.2 point a.]{BeBe23}, adapted to our specific cubical $\omega$-set definition. 

\medskip 

We start with a given cubical $\omega$-set $\left( \Qs^{n}_{D-\{d\}} \xleftarrow{s^n_{D,d}, \ t^n_{D,d}} \Qs^{n+1}_{D}\right)$, with $n\in\NN$, $D\subset \NN_0$ such that $|D|=n$ and $d\in D$. 

\medskip 

We are going to construct a self-dual reflective cubical $\omega$-magma $\left( \Ms(\Qs)^{n}_{D-\{d\}} \xleftarrow{\hat{s}^n_{D,d}, \ \hat{t}^n_{D,d}} \Ms(\Qs)^{n+1}_{D}\right)$, with compositions $\circ^n_{D,d}$, self-dualities $*^n_{D,d}$ and reflectors $\iota^n_{D,d}$ as in definition~\ref{def: operations}; and a morphism of cubical $\omega$-sets $\left(\Qs^n_D\xrightarrow{\eta^n_D}\Ms(\Qs)^n_D\right)$ that satisfies the universal factorization property in the first part of definition~\ref{def: free}.  

\medskip 

We start, for $n:=0$ and necessarily $D:=\varnothing$, defining $\Ms(\Qs)_D^0:=\Qs_D^0$ and $\Qs_D^0\xrightarrow{\eta^0_D} \Ms(\Qs)_D^0$ as the identity map. 

\medskip 

The construction of ``free 1-arrows'' starts defining free 1-identities, in every direction $D:=\{d\}$ with $d\in\NN_0$, corresponding to the already available objects in $\Ms(\Qs)^0_\varnothing$: we set, for all $d\in\NN_0$ and 1-direction $D:=\{d\}$, $d(\Qs^0):=\{(x,d) \ | \ x\in\Qs^0\}$ and $\Ms(\Qs)^1[0]^0_D:=\Qs^1_D\cup d(\Qs^0)$; furthermore we extend the definition of sources and targets for the extra identity 1-arrows: $\Ms(\Qs)^0_\varnothing\xleftarrow{s^0_{D,d},\ t^0_{D,d}} d(\Qs^0)$ by $s^0_{D,d}(x,d):=x=:t^0_{D,d}(x,d)$. 

\medskip 

We also introduce the structural map $\eta^1_D:\Qs^1_D\to \Ms(\Qs)^1[0]^0_D$ as the inclusion of $\Qs^1_D$.

\medskip 

We now further introduce arbitrary free duals (in the already available direction) of the 1-arrows in $\Ms(\Qs)^1[0]^0$ by the following iterative procedure: suppose that $\Ms(\Qs)^1[0]^j$ has been already constructed; \footnote{
Notice that the running index $j\in\NN$ is here denoting the number of successive iterations of a given duality, here denoted by the symbol $\gamma_d$, applied to an element $x\in\Ms(\Qs)^1[0]^0$. 
} 
for all $d\in\NN_0$ and $D:=\{d\}$ we provide $\Ms(\Qs)^1[0]^{j+1}_D:=\{(x,\gamma_d) \ | \  x\in\Ms(\Qs)^1[0]^j_D\}$; furthermore, we extend the source and target maps to the new extra free dual 1-arrows: $s^0_{D,d}(x,\gamma_d):=t^0_{D,d}(x)$ and $t^0_{D,d}(x,\gamma_d):=s^0_{D,d}(x)$, for all $x\in\Ms(\Qs)^1[0]^j_D$ and $D=\{d\}$ with $d\in\NN_0$. We then take $\Ms(\Qs)^1[0]_D:=\bigcup_{j\in\NN}\Ms(\Qs)^1[0]^j_D$ with the given source and targets. 

\medskip 

The next step consists in introducing free ``concatenations'' (in the only available direction) of the previous 1-arrows (and their source/target maps). Suppose that we already got $\Ms(\Qs)^1[m]$ for all $0\leq m\leq k$; for all $d\in\NN_0$, $D:=\{d\}$, we recursively introduce: \footnote{Notice that here the running index $m\in\NN_0$ denotes the level of concatenations, corresponding to the number of compositions in the given direction $d$.} 
\begin{equation*}
\Ms(\Qs)^1[k+1]^0_D:=\left\{(x,d,y) \ | \ (x,y)\in\Ms(\Qs)^1[i]_D\times\Ms(\Qs)^1[j]_D, \ i+j=k+1, \ s^0_{D,d}(x)=s^0_{D,d}(y)\right\}; 
\end{equation*}
we also recursively extend the source and target maps to the newly introduced free concatenations: 
\begin{equation*}
s^0_{D,d}(x,d,y):=s^0_{D,d}(y), \quad \quad t^0_{D,d}(x,d,y):=t^0_{D,d}(x), \quad\quad \forall (x,d,y)\in\Ms(\Qs)^1[k+1]_D. 
\end{equation*}
The family $\Ms(\Qs)^1[k+1]_D:=\bigcup_{j\in\NN}\Ms(\Qs)^1[k+1]^j_D$, for $D:=\{d\}$ and $d\in\NN_0$, with its source and target maps into $\Ms(\Qs)^0$, is obtained repeating the iteration construction of duals. 

\medskip 

Then we introduce $\Ms(\Qs)^1_D:=\bigcup_{k\in\NN}\Ms(\Qs)^1[k]_D$ with the already disjointly defined sources and targets. 

\bigskip 

As final recursive step, suppose now that we already defined $\Qs^n_{D'}\xrightarrow{\eta^n_{D'}}\Ms(\Qs)^n_{D'}$, for $D'\subset\NN_0$ with $|D'|=n$, and, for all $d\in D'$, also all the source and target maps $\Ms(\Qs)^{n-1}_{D'-\{d\}}\xleftarrow{s^{n-1}_{D',d}, \ t^{n-1}_{D',d}}\Ms(\Qs)^n_{D'}$, we proceed to define the next stage $\Ms(\Qs)^n_{D-\{d\}}\xleftarrow{s^{n}_{D,d}, \ t^{n}_{D,d}}\Ms(\Qs)^{n+1}_D$, for all $D\subset \NN_0$ with $|D|=n+1$ and $d\in D$, with the structural maps $\eta^{n+1}_D:\Qs^{n+1}_D\to\Ms(\Qs)^{n+1}_D$. 

\medskip 

We start setting $\Ms(\Qs)^{n+1}[0]^0_D:=\Qs^{n+1}_D\cup \left(\bigcup_{d\in D} d(\Qs^n_{D-\{d\}})\right)$, where, $d(\Qs^n_{D-\{d\}}):=\{(x,d) \ | \ x\in \Qs^n_{D-\{d\}}\}$, for all $D\subset \NN_0$ with $|D|=n+1$ and $d\in D$. We also extend the source and target maps to each set $d(\Qs^n_{D-\{d\}})$, for $d\in D$,
via $s^n_{D,d}(x,d):=x=:t^n_{D,d}(x,d)$ and, whenever $e\neq d\in D$, with $s^n_{D,e}(x,d)=(s^{n-1}_{D-\{d\},e}(x),e)$, 
$t^n_{D,e}(x,d)=(t^{n-1}_{D-\{d\},e}(x),e)$. 

\medskip 

Then we recursively introduce $\Ms(\Qs)^{n+1}[0]^{j+1}_D:=\{(x,\gamma_d) \ | \ x\in\Ms(\Qs)^{n+1}[0]^j_D, \ d\in D \}$; we further extend the source and target maps as $s^n_{D,d}(x,\gamma_d):=t^n_{D,d}(x)$,  $t^n_{D,d}(x,\gamma_d):=s^n_{D,d}(x)$ and, whenever $d\neq e\in D$, via 
$s^n_{D,e}(x,\gamma_d):=s^n_{D,e}(x)$ and $t^n_{D,e}(x,\gamma_d):=t^n_{D,e}(x)$; 
finally we set $\Ms(\Qs)^{n+1}[0]_D:=\bigcup_{j\in\NN}\Ms(\Qs)^{n+1}[0]^j_D$, for all $D\subset \NN_0$ with $|D|=n+1$ with the already introduced source and target maps. 

\medskip 

At last we suppose already defined all $\Ms(\Qs)^{n+1}[m]_D$, for all $0\leq m\leq k$, with their source and target maps and we are going to introduce 
\begin{equation*}
\Ms(\Qs)^{n+1}[k+1]^0_D:=\left\{(x,d,y) \ | \ (x,y)\in\Ms(\Qs)^{n+1}[i]_D\times\Ms(\Qs)^{n+1}[j]_D, \ i+j=k+1, \ d\in D, \ s^n_{D,d}(x)=t^n_{D,d}(y)\right\}
\end{equation*}
defining $s^n_{D,d}(x,d,y)=s^n_{D,d}(y)$, $t^n_{D,d}(x,d,y)=t^n_{D,d}(x)$ and, whenever $e\neq d\in D$, 
$s^n_{D,e}(x,d,y)=(s^n_{D,e}(x),d,s^n_{D,e}(y))$ 
$t^n_{D,d}(x,d,y)=(t^n_{D,e}(x),d,t^n_{D,e}(y))$; setting $\Ms(\Qs)^{n+1}[k]_D:=\bigcup_{j\in\NN_0}\Ms(\Qs)^{n+1}[k]^j_D$, with the same previous recursion strategy freely adding dual $(n+1)$-arrows, we finally define $\Ms(\Qs)^{n+1}_D:=\bigcup_{k\in\NN}\Ms(\Qs)^{n+1}[k]_D$, with its already locally well-defined source and target maps. 

\medskip 

We also define $\eta^{n+1}_D:\Qs^{n+1}_D\to \Ms(\Qs)^{n+1}_D$ as the inclusion into $\Ms(\Qs)^{n+1}[0]^0_D\subset \Ms(\Qs)^{n+1}_D$. 

\bigskip 

Up to this point we managed to recursively define a morphism $\Qs\xrightarrow{\eta}\Ms(\Qs)$ of cubical $\omega$-sets.  

\medskip 

The nullary, unary and binary operations on the cubical $\omega$-set $\Ms(\Qs)$ are readily available as follows: 
\begin{gather*}
\iota^n_{D,d}:\Ms(\Qs)^{n-1}_{D-\{d\}}\to \Ms(\Qs)^n_D, \quad \quad x\mapsto (x,d), 
\\
*^n_{D,d}: \Ms(\Qs)^{n}_{D}\to \Ms(\Qs)^n_D, \quad \quad(x)^{*^n_{D,d}}:=(x,\gamma_d), 
\\
\circ^n_{D,d}:\Ms(\Qs)^n_D\times_{\Ms{\Qs}^{n-1}_{D-\{d\}}}\Ms(\Qs)^n_D\to\Ms(\Qs)^n_D, \quad \quad 
(x\circ^n_{D,d}y):=(x,d,y). 
\end{gather*}
With such definition and the already provided recursive definition of source and target maps, the cubical $\omega$-set $\Ms(\Qs)$ becomes a self-dual reflective cubical $\omega$-magma. 

\bigskip 

We only need to check the universal factorization property of the morphism $\Qs\xrightarrow{\eta}\Ms(\Qs)$. 

Given a morphism $\Qs\xrightarrow{\phi}\Ms$ into the underlying cubical $\omega$-set of a self-dual reflective cubical $\omega$-magma $\Ms$, the requirement $\phi=\hat{\phi}\circ\eta$ already implies that the restriction of $\hat{\phi}$ to the cubical $\omega$-subset $\Qs$ must coincide with $\phi$. Since $\Ms(\Qs)\xrightarrow{\hat{\phi}}\Ms$ must be a morphism of self-dual reflective cubical $\omega$-magmas, we necessarily have $\hat{\phi}(\iota^{n+1}_{D,d}(x))=\iota^{n+1}_{D,d}(\hat{\phi}^n_D(x))$, hence $(x,d)\mapsto (\phi(x),d)$; similarly $\hat{\phi}(x^{*^n_{D,d}})=(\hat{\phi}(x))^{*^n_{D,d}}$ and finally $\hat{\phi}(x\circ^n_{D,d}y)=\hat{\phi}(x)\circ^n_{D,d}\hat{\phi}(y)$ and hence the morphism $\hat{\phi}$ is uniquely determined by our recursive construction, once it has been fixed (as in this case) on $\eta(\Qs)$. 
\end{proof}

Instead of giving a direct recursive proof, the following lemma~\ref{lem: free-cat} is obtained with the same ``quotient by congruences'' technique as in~\cite[section 3.2]{BeBe17}. In order to do so, we briefly recall the necessary preliminary material on congruences in the present setting of cubical $\omega$-magmas: 
\begin{itemize}
\item 
The category of morphisms of cubical $\omega$-sets/magmas admits finite products (it is actually complete). Given two cubical $\omega$-magmas $\Ms,\Ns$, their \emph{product $\omega$-magma} $\Ms\times\Ns$ can be constructed via Cartesian products $(\Ms\times\Ns)^n_D:=\Ms^n_D\times\Ns^n_D$, for $n\in\NN$ and $D\subset\NN_0$ with $|D|=n$, equipped with componentwise defined sources/target maps, reflectors, self-dualities and compositions. 
\item
A \emph{congruence $\Rs$ in a cubical $\omega$-magma $\Ms$} is a cubical $\omega$-magma $\Rs$ such that $\Rs^n_D\subset \Ms^n_D\times\Ms^n_D$, for all $n\in\NN$ and all $D\subset \NN_0$ with $|D|=n$, and such that the inclusion $\left(\Rs^n_D\xrightarrow{\nu^n_D}\Ms^n_D\times\Ms^n_D\right)$ is a morphism of cubical $\omega$-magmas, from $\Rs$ into the product cubical $\omega$-magma $\Ms\times\Ms$. \footnote{
Equivalently $\Rs$ is a cubical $\omega$-subset of the product cubical $\omega$-set $\Ms\times\Ms$ that is algebraically closed under all the nullary reflectors, unary self-dualities and binary composition operations in the cubical $\omega$-magma $\Ms$. 
}  
\item 
Given a congruence $\Rs$ in a cubical $\omega$-magma $\Ms$, we define the \emph{quotient $\omega$-magma $\Ms/\Rs$} and the \emph{quotient morphism $\left(\Ms^n_D\xrightarrow{\pi^n_D}(\Ms/\Rs)^n_D\right)$}, for $n\in\NN$, $D\subset \NN_0$ with $|D|=n$, as follows: 
\begin{itemize}
\item[] 
the quotient sets $(\Ms/\Rs)^n_D:=\Ms^n_D/\Rs^n_D$ are a cubical $\omega$-magma with well-defined sources/targets:
\begin{equation*} 
[x]_{\Rs^n_D}\mapsto [s^n_{D,d}(x)]_{\Rs^n_{D-\{d\}}}, 
\quad \quad 
[x]_{\Rs^n_D}\mapsto [t^n_{D,d}(x)]_{\Rs^n_{D-\{d\}}}, 
\quad \quad 
\forall x\in\Ms^{n+1}_D, \quad d\in D;
\end{equation*} 
and one gets a (self-dual reflective) cubical $\omega$-magma with the well-defined operations: 
\begin{equation*}
[x]_{\Rs^n_D}\hat{\circ}^n_{D-\{d\}}[y]_{\Rs^n_D}:=[x\circ^n_{D-\{d\}}y]_{\Rs^n_D}, 
\quad
([x]_{\Rs^n_D})^{\hat{*}^n_{D,d}}:=[x^{*^n_{D-\{d\}}}]_{\Rs^n_D}, 
\quad 
\hat{\iota}^{n+1}_{D,d}([x]_{\Rs^n_D}):=[\iota^{n+1}_{D,d}(x)]_{\Rs^{n+1}_D}, \quad \forall x,y\in\Ms^n_D. 
\end{equation*}
\item[] 
the maps $\pi^n_D: x\mapsto [x]_{\Rs^n_D}$, for $x\in\Ms^n_D$, provide the quotient morphism between cubical $\omega$-magmas. 
\end{itemize}
\item 
Every morphism $\Ms\xrightarrow{\phi}\Cs$ of self-dual reflective cubical $\omega$-magmas induces a \emph{kernel congruence} of self-dual reflective $\omega$-magmas $\Ks_\phi\subset \Ms\times\Ms$ defined by: 
\begin{equation*}
\Ks_\phi:=\{(x,y)\in\Ms\times\Ms \ | \ \phi(x)=\phi(y) \}. 
\end{equation*}
\item 
Let $\Ms\xrightarrow{\phi}\Cs$ be a morphism of self-dual reflective cubical $\omega$-magmas, given another congruence in $\Ms$ with $\Es\subset\Ks_\phi$, there exists a unique morphism $\Ms/\Es\xrightarrow{\hat{\phi}}\Cs$ of self-dual reflective cubical $\omega$-magmas such that $\phi=\hat{\phi}\circ \pi_\Es$, where $\Ms\xrightarrow{\pi_\Es}\Ms/\Es$ is the quotient morphism. 
The well-defined morphism $\hat{\phi}$ is uniquely determined by the relation $\hat{\phi}([x]_\Es):=\phi(x)$, for all $x\in\Ms$. 
\end{itemize}

\begin{lemma}\label{lem: free-cat}
There exists a free involutive cubical strict $\omega$-category over a cubical $\omega$-set $\Qs$. \footnote{
For simplicity, we omit in the following the explicit indication of the forgetful functors.
} 
\end{lemma}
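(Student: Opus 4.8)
The plan is to mirror the ``quotient by congruences'' argument: start from the free self-dual reflective cubical $\omega$-magma $\Ms(\Qs)$ produced in Lemma~\ref{lem: free-magma}, with its unit $\Qs\xrightarrow{\eta}\Ms(\Qs)$, and carve out the free category as a suitable quotient. First I would collect into a single family $\Sigma\subseteq\bigcup_{n,D}\left(\Ms(\Qs)^n_D\times\Ms(\Qs)^n_D\right)$ every pair $(u,v)$ obtained as the two sides of an instance of one of the eight families of axioms defining a strict involutive cubical $\omega$-category (associativity, unitality, functoriality of identities and exchange, together with the involutivity, commutativity of involutions, functoriality of involutions and Hermitianity of identities axioms of Definition~\ref{def: axinv}), letting the variables appearing in each axiom range over all admissible elements of $\Ms(\Qs)$.

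Next I would let $\Rs$ be the smallest congruence of self-dual reflective cubical $\omega$-magmas on $\Ms(\Qs)$ containing $\Sigma$. Its existence follows from the fact that an arbitrary intersection of congruences is again a congruence (the intersection is taken level-wise and, being so factor by factor, is automatically closed under all source/target maps, reflectors, self-dualities and compositions) together with the observation that the total relation $\Ms(\Qs)\times\Ms(\Qs)$ is a congruence containing $\Sigma$; hence $\Rs:=\bigcap\{\Rs' \ | \ \Rs' \text{ congruence}, \ \Sigma\subseteq\Rs'\}$ is well defined. Forming the quotient $\Cs(\Qs):=\Ms(\Qs)/\Rs$ with quotient morphism $\Ms(\Qs)\xrightarrow{\pi}\Cs(\Qs)$, and recalling that the operations on $\Cs(\Qs)$ act on representatives, the inclusion $\Sigma\subseteq\Rs$ forces every instance of the axioms to hold in $\Cs(\Qs)$; thus $\Cs(\Qs)$ is a strict involutive cubical $\omega$-category, and I set $\bar{\eta}:=\pi\circ\eta:\Qs\to\Cs(\Qs)$.

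It remains to verify the universal factorization property of Definition~\ref{def: free}. Given any morphism $\Qs\xrightarrow{\phi}\Cs$ into a strict involutive cubical $\omega$-category $\Cs$ (viewed in particular as a self-dual reflective cubical $\omega$-magma), the freeness of $\Ms(\Qs)$ supplies a unique magma morphism $\Ms(\Qs)\xrightarrow{\tilde{\phi}}\Cs$ with $\phi=\tilde{\phi}\circ\eta$. Since the axioms hold in $\Cs$, the map $\tilde{\phi}$ sends both components of each pair in $\Sigma$ to the same element, so $\Sigma\subseteq\Ks_{\tilde{\phi}}$; by minimality of $\Rs$ this gives $\Rs\subseteq\Ks_{\tilde{\phi}}$, and the factorization property of kernel congruences recalled before the statement yields a unique morphism $\Cs(\Qs)\xrightarrow{\hat{\phi}}\Cs$ of self-dual reflective cubical $\omega$-magmas (equivalently, of involutive cubical $\omega$-categories) with $\tilde{\phi}=\hat{\phi}\circ\pi$, whence $\phi=\hat{\phi}\circ\bar{\eta}$. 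Uniqueness of $\hat{\phi}$ follows because any competitor $\hat{\phi}'$ would make $\hat{\phi}'\circ\pi$ a magma morphism factoring $\phi$ through $\eta$, forcing $\hat{\phi}'\circ\pi=\tilde{\phi}=\hat{\phi}\circ\pi$ and, $\pi$ being level-wise surjective, $\hat{\phi}'=\hat{\phi}$.

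Routine bookkeeping aside, the delicate point I expect is ensuring that the single generated congruence $\Rs$ is simultaneously compatible with the entire cubical structure: the relation imposed at level $n$ must descend consistently under all the source and target maps $s^{n}_{D,d}, t^{n}_{D,d}$ to the relation at level $n-1$, and must remain closed under the reflectors $\iota^n_{D,d}$, the self-dualities $*^n_{D,d}$ and the partially defined compositions $\circ^n_{D,d}$ across all directions $d\in D$ at once. This coherence is exactly what is packaged into the notion of congruence of self-dual reflective cubical $\omega$-magmas recalled above, so the crux of the argument is that $\Sigma$ can indeed be saturated into such a congruence, after which no further computation is needed.
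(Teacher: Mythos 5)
Your proposal is correct and follows essentially the same route as the paper's proof: impose the family of axiom-instance pairs on the free self-dual reflective cubical $\omega$-magma $\Ms(\Qs)$ from Lemma~\ref{lem: free-magma}, pass to the congruence generated by them (constructed as the intersection of all congruences containing the pairs), and obtain the universal property by factoring the magma morphism $\tilde{\phi}$ through the quotient via its kernel congruence. Your only deviations---letting the axiom variables range over all admissible elements of $\Ms(\Qs)$ (which is in fact the correct reading; the paper's displayed ranges over $\Qs^n_D$ in equation~\eqref{eq: X} appear to be a slip, as the usage in Lemma~\ref{lem: contraction} confirms) and spelling out the uniqueness of $\hat{\phi}$ via level-wise surjectivity of $\pi$---are harmless refinements of the same argument.
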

\begin{proof}
Starting with the cubical $\omega$-set $\Qs$, we first utilize lemma~\ref{lem: free-magma} to produce $\Qs\xrightarrow{\eta}\Ms(\Qs)$, a free self-dual reflective cubical $\omega$-magma over $\Qs$. 

\medskip 

For $n\in\NN_0$, $D\subset\NN_0$ with $|D|=n$, consider the family of relations $\Xs^n_{D}\subset\Ms(\Qs)^n_D\times\Ms(\Qs)^n_D$ consisting of all the pairs of elements corresponding to the ``missing cubical categorical axioms equalities'' within terms of $\Ms(\Qs)$; 
in practice $\Xs^n_D$ is obtained as the union of the following families of subsets of $\Ms(\Qs)^n_D\times\Ms(\Qs)^n_D$: 
\begin{align} \notag 
\bigcup_{\phantom{e\neq}d\in D}
&\left\{\left(x\circ^n_{D,d} (y \circ^n_{D,d} z),\ (x\circ^n_{D,d} y)  \circ^n_{D,d} z\right) 
\ | \ 
(x,y,z)\in \Qs^n_D\times_{\Qs^{n-1}_{D-\{d\}}} \Qs^n_D\times_{\Qs^{n-1}_{D-\{d\}}} \Qs^n_D 
\right\},
\\ \notag
\bigcup_{\phantom{e\neq}d\in D}
&\left(\left\{\left(
x\circ^n_{D,d}\iota^n_{D,d}(s^{n-1}_{D,d} (x)), \ x \right) \ | \ x\in \Qs^n_D \right\}
\cup 
\left\{\left(x, \ \iota^n_{D,d} (t^{n-1}_{D,d} (x))\circ^n_{D,d} x\right)  \ | \ 
x\in \Qs^n_D \right\}\right),
\\ \notag
\bigcup_{e\neq d\in D}
&\left\{\left(\iota^n_{D,d} (x\circ^{n-1}_{D-\{d\},e} y), \ \iota^n_{D,d} (x)\circ^n_{D,e} \iota^n_{D,d} (y)\right)
\ | \ (x,y)\in \Qs^n_D\times_{\Qs^{n-1}_{D-\{d\}}} \Qs^n_D \right\},
\\ \notag
\bigcup_{e\neq f\in D} 
&
\left\{\left(
(x\circ^n_{D,e} y) \circ^n_{D,f} (w\circ^n_{D,e} z),\  (x\circ^n_{D,f} w) \circ^n_{D,e} (y\circ^n_{D,f} z) \right)
\ | \  
\ \begin{minipage}{3.9cm}
$(x,y),(w,x)\in \Qs^n_D\times_{\Qs^{n-1}_{D-\{e\}}} \Qs^n_D$,  
\\ 
$(x,w),(y,z)\in \Qs^n_D\times_{\Qs^{n-1}_{D-\{f\}}} \Qs^n_D$ 
\end{minipage}
\right\}, 
\\ \label{eq: X}
\bigcup_{\phantom{e\neq}d\in D} 
&\left\{
\left((x^{*^n_{D,d}})^{*^n_{D,d}}, \ x\right) 
\ | \  x\in \Qs^n_D  
\right\},
\\ \notag
\bigcup_{e\neq f\in D} 
&\left\{
\left(
(x^{*^n_{D,e}})^{*^n_{D,f}}, \ (x^{*^n_{D,f}})^{*^n_{D,e}}
\right)
x\in \Qs^n_D  
\right\},
\\ \notag
\bigcup_{\phantom{e\neq}d\in D} 
&\left\{
\left(
(x\circ^n_{D,d} y)^{*^n_{D,d}}, \ (y^{*^n_{D,d}})\circ^n_{D,d} (x^{*^n_{D,d}}) 
\right)
\ | \ 
(x,y)\in \Qs^n_{D-\{d\}} \times_{}\Qs^n_{D-\{d\}} 
\right\},
\\ \notag
\bigcup_{e\neq d\in D}
&\left\{\left(
(x\circ^n_{D,d} y)^{*^n_{D,e}},\ (x^{*^n_{D,e}})\circ^n_{D,d} (y^{*^n_{D,e}}) \right)
\ | \ 
(x,y)\in \Qs^n_{D-\{d\}} \times_{}\Qs^n_{D-\{d\}} 
\right\},
\\ \notag
\bigcup_{\phantom{e\neq}d\in D}
&\left\{
\left(
(\iota^n_{D,d}(x))^{*^n_{D,d}},\ \iota^n_{D,d}(x)
\right)
\ | \ 
x\in \Qs^n_D
\right\},
\\ \notag
\bigcup_{d\neq e\in D} 
&\left\{
\left(
(\iota^n_{D,d}(x))^{*^n_{D,e}}, \ \iota^n_{D,d}(x^{*^n_{D,e}}) 
\right)
\ | \  
x\in \Qs^n_D 
\right\}. 
\end{align}

\medskip 

The \emph{congruence $\Rs_\Xs$ generated by the cubical $\omega$-relation $\Xs$ in $\Ms(\Qs)$} is the smallest congruence in $\Ms(\Qs)$ containing $\Xs$ and is obtained taking the intersection of the family of all the congruences in $\Ms(\Qs)$ containing $\Xs$. 

\medskip 

The quotient self-dual reflective cubical $\omega$-magma $\Ms(\Qs)/\Rs_\Xs$ by the congruence $\Rs_\Xs$ turns out to be a strict involutive cubical \hbox{$\omega$-category}, since $\Xs\subset\Rs_\Xs$. 

\medskip 

The composition $\Qs\xrightarrow{\eta}\Ms(\Qs)\xrightarrow{\pi}\Ms(\Qs)/\Rs_\Xs$ of the quotient morphism of self-dual reflective cubical $\omega$-magmas $\Ms(\Qs)\xrightarrow{\pi}\Ms(\Qs)/\Rs_\Xs$ with the natural inclusion of cubical $\omega$-sets $\Qs\xrightarrow{\eta}\Ms(\Qs)$, is a morphism of cubical $\omega$-sets that satisfies the universal factorization property defining free involutive cubical $\omega$-categories: 
\\
given $\Qs\xrightarrow{\phi}\Cs$ a morphism of cubical $\omega$-sets into the underlying cubical $\omega$-set of an involutive cubical $\omega$-category $\Cs$, by the universal factorization property of the free self-dual reflective cubical $\omega$-magma $\Qs\xrightarrow{\eta}\Ms(\Qs)$, there exists a unique morphism of self-dual reflective cubical $\omega$-magmas $\Ms(\Qs)\xrightarrow{\tilde{\phi}}\Cs$ such that $\phi=\tilde{\phi}\circ\eta$. 

The kernel relation $\Ks_{\tilde{\phi}}\subset \Ms(\Qs)\times\Ms(\Qs)$, induced by the morphism $\tilde{\phi}$, is a congruence of self-dual reflective cubical $\omega$-magma and it necessarily satisfies  $\Xs\subset \Ks_{\tilde{\phi}}$ and hence $\Rs_\Xs\subset\Ks_{\tilde{\phi}}$. 
It follows that there exists a unique morphism of involutive cubical $\omega$-categories $\Ms(\Qs)/{\Rs_\Xs}\xrightarrow{\hat{\phi}}\Cs$ such that $\tilde{\phi}=\hat{\phi}\circ\pi$ and so $\phi=\tilde{\phi}\circ\eta=\hat{\phi}\circ\pi\circ\eta$. 
\end{proof}

\begin{corollary}\label{cor: monad}
There is a free $\omega$-category monad obtained by composing the free involutive $\omega$-category functor with the forgetful fuctor into the category of $\omega$-sets. 
\end{corollary}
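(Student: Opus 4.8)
The plan is to recognize that the universal factorization property established in Lemma~\ref{lem: free-cat} is exactly the assertion that the free involutive cubical $\omega$-category construction is left adjoint to the forgetful functor $\Ug$ into cubical $\omega$-sets, and then to invoke the standard fact that every adjunction gives rise to a monad on the domain of its right adjoint.

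First I would upgrade the object assignment $\Qs\mapsto\Cs(\Qs)$ of Lemma~\ref{lem: free-cat} to a functor $\Ff$ from the category of cubical $\omega$-sets to the category of strict involutive cubical $\omega$-categories. Given a morphism $\Qs\xrightarrow{f}\hat{\Qs}$ of cubical $\omega$-sets, the composite $\Qs\xrightarrow{f}\hat{\Qs}\xrightarrow{\hat{\eta}}\Ug(\Cs(\hat{\Qs}))$ is a morphism of cubical $\omega$-sets into the underlying cubical $\omega$-set of an involutive cubical $\omega$-category; the universal factorization property then yields a unique covariant involutive functor $\Ff(f)\colon\Cs(\Qs)\to\Cs(\hat{\Qs})$ with $\Ug(\Ff(f))\circ\eta=\hat{\eta}\circ f$. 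Uniqueness in the universal property immediately forces $\Ff(\id_\Qs)=\id$ and $\Ff(g\circ f)=\Ff(g)\circ\Ff(f)$, so $\Ff$ is a functor, and the same uniqueness makes the family $(\eta_\Qs)$ into a natural transformation $\eta\colon\id\Rightarrow\Ug\circ\Ff$, the prospective unit.

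Then I would produce the counit. For each strict involutive cubical $\omega$-category $\Cs$, applying the universal property to the identity morphism $\Ug(\Cs)\xrightarrow{\id}\Ug(\Cs)$ produces a unique covariant involutive functor $\varepsilon_\Cs\colon\Ff(\Ug(\Cs))\to\Cs$; uniqueness again gives naturality of $\varepsilon\colon\Ff\circ\Ug\Rightarrow\id$ together with the two triangle identities $\Ug(\varepsilon)\circ\eta\Ug=\id_{\Ug}$ and $\varepsilon\Ff\circ\Ff(\eta)=\id_{\Ff}$. Thus $\Ff\dashv\Ug$.

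Finally I would define the endofunctor $T:=\Ug\circ\Ff$ on cubical $\omega$-sets, with unit $\eta\colon\id\Rightarrow T$ as above and multiplication $\mu:=\Ug\,\varepsilon\,\Ff\colon T^2\Rightarrow T$. The monad axioms, namely the two unit laws $\mu\circ T\eta=\id_T=\mu\circ\eta T$ and associativity $\mu\circ T\mu=\mu\circ\mu T$, are precisely the triangle and naturality identities of the adjunction transported through $\Ug$ and $\Ff$, so they hold automatically. The only point requiring care, and the main ``obstacle'' even though it is entirely routine here, is checking that $\Ff$ is genuinely functorial and that $\eta,\varepsilon$ are genuinely natural across the heavily indexed cubical structure; all of this reduces to the uniqueness clause of the universal factorization property in Lemma~\ref{lem: free-cat}, which collapses each required diagram to a single extension and hence forces commutativity.
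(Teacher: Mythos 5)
Your proposal is correct and takes essentially the same route as the paper, which offers no explicit proof of this corollary precisely because it regards the chain ``universal factorization property $\Rightarrow$ adjunction $\Rightarrow$ monad'' as standard, invoking it via \cite{Lei14} and \cite{Ri16} in the analogous theorem~\ref{th: weak}. Incidentally, your multiplication $\mu=\Ug\,\varepsilon\,\Ff$ is the correct one, whereas the paper's later displayed formula ``$F\circ\epsilon\circ U$'' for the monadic multiplication is a typo.
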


The subsequent lemma is obtained recursively, as done for the globular case in~\cite[proposition 3.3]{BeBe17}, introducing an intermediate construction of ``free cubical contraction $n$-cells'' at each stage $n$ of the construction of free self-dual reflective magmas and their quotient free involutive categories over a given cubical $\omega$-set.
\begin{lemma}\label{lem: contraction} 
There exists a free self-dual cubical Penon-Kachour contraction over a cubical $\omega$-set $\Qs$.
\end{lemma}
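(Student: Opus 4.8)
The plan is to build the free self-dual cubical Penon-Kachour contraction by a simultaneous recursive construction that interleaves, dimension by dimension, the free magma construction of lemma~\ref{lem: free-magma}, the quotient-by-congruence construction of lemma~\ref{lem: free-cat}, and the freely adjoined contraction cells. Rather than first producing the free magma in all dimensions and then quotienting, I would construct the data $\Ms^n_D$, the projection $\pi^n_D:\Ms^n_D\to\Cs^n_D$ onto the free strict involutive category, and the contraction maps $\kappa^n_{D,d}$ together, stage by stage in $n$. This is the only workable order, because each contraction cell $\kappa^n_{D,d}(x,y)$ at level $n$ must be adjoined as a genuinely new $n$-cell of $\Ms$ whose source and target are the prescribed $(n-1)$-cells $x,y$ and whose image under $\pi$ is forced to be the reflector $\iota^n_{D\cup\{d\},d}(\pi(x))$; hence the level-$n$ magma must already know the level-$(n-1)$ fibre product $\Ms^{n-1}_D(\pi)$ before the new cells can be introduced.

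First I would set up the recursion hypothesis: assume that, up to dimension $n$, we have a self-dual reflective cubical $\omega$-magma $\Ms$, a strict involutive cubical $\omega$-category $\Cs$, a magma morphism $\pi:\Ms\to\Cs$, and contraction maps $\kappa^m_{D,d}$ for all $m\le n$, satisfying all the axioms of definition~\ref{def: contraction} in those dimensions, and that this partial structure is free in the appropriate sense. The recursive step in dimension $n+1$ then proceeds in two substages. In the first substage I freely generate new $(n+1)$-cells: the basic cells from $\Qs^{n+1}_D$ and the reflector cells $d(\Ms^n_{D-\{d\}})$ exactly as in lemma~\ref{lem: free-magma}, plus the freely adjoined contraction cells $\kappa^{n+1}_{D,d}(x,y)$ indexed by pairs $(x,y)\in\Ms^n_D(\pi)$ with $d\notin D$; I then iterate the self-duality and concatenation constructions of lemma~\ref{lem: free-magma} to close up under $*$ and $\circ$, defining the source/target maps on all new cells by the formulas dictated by definition~\ref{def: operations} and definition~\ref{def: contraction}. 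In the second substage I form the congruence $\Rs_\Xs$ generated by the strict categorical and involutive axiom relations of lemma~\ref{lem: free-cat}, now enlarged so that $\Xs$ also contains the pairs forcing the degeneracy condition $\kappa^{n+1}_{D,d}(x,x)=\iota^{n+1}_{D,d}(x)$ and the pairs making $\pi$ send each contraction cell to the correct reflector; taking $\Cs:=\Ms/\Rs_\Xs$ in the new dimension gives the required strict involutive category and the quotient morphism is the new $\pi$.

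The verification of the universal factorization property then runs along the lines of the proofs of lemma~\ref{lem: free-magma} and lemma~\ref{lem: free-cat} combined: given any morphism of cubical $\omega$-sets $\Qs\xrightarrow{\phi}\Ug(\hat\Ms)$ into the magma of another self-dual Penon-Kachour contraction $(\hat\Ms\xrightarrow{\hat\pi}\hat\Cs,\hat\kappa)$, the freeness of the magma part forces a unique $\Phi:\Ms\to\hat\Ms$ extending $\phi$ on generators, compatibly sending reflectors, self-dualities, compositions, and now contraction cells to their hatted counterparts via $\Phi\circ\kappa=\hat\kappa\circ\phi$; the inclusion $\Xs\subset\Ks_{\hat\pi\circ\Phi}$ then induces the unique $\phi:\Cs\to\hat\Cs$ with $\hat\pi\circ\Phi=\phi\circ\pi$, giving the morphism of contractions. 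The main obstacle I expect is well-definedness and mutual compatibility of the recursively defined source and target maps on the newly adjoined contraction cells together with the pre-existing duality and concatenation cells: one must check that the prescribed faces of $\kappa^{n+1}_{D,d}(x,y)$, namely lower-dimensional contraction cells $\kappa^n_{D-\{e\},d}(s(x),s(y))$ and $\kappa^n_{D-\{e\},d}(t(x),t(y))$, satisfy the cubical compatibility axioms of definition~\ref{def: set}, and that the forced value of $\pi$ on contraction cells is consistent with $\pi$ being a magma morphism that also respects the strict category relations in $\Rs_\Xs$; this bookkeeping is routine in spirit but delicate, and is precisely where the interleaving of the three constructions must be shown to cohere.
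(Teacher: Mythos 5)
Your overall architecture --- an interleaved, dimension-by-dimension recursion that adjoins free contraction cells alongside the free-magma generators of lemma~\ref{lem: free-magma} and then quotients by a congruence to obtain the strict involutive category and the projection $\pi$ --- is exactly the paper's strategy, and your universal-property argument (freeness of the magma part determines $\Phi$ on generators and on contraction cells, then a kernel-congruence argument induces the functor on quotients) matches the paper's as well. But there is one genuine misstep: you impose the degeneracy condition $\kappa^{n+1}_{D,d}(x,x)=\iota^{n+1}_{D,d}(x)$ by adding the corresponding pairs to the congruence $\Rs_\Xs$. That congruence is used only to form $\Cs:=\Ms/\Rs_\Xs$; the magma $\Ms$ itself is never quotiented. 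So in your construction $\kappa^{n+1}_{D,d}(x,x)$ remains a freely adjoined cell of $\Ms$, distinct from $\iota^{n+1}_{D,d}(x)$, and the axiom ``$x=y\imp\kappa^n_{D,d}(x,y)=\iota^n_{D,d}(x)$'' of definition~\ref{def: contraction} --- which is an equation between elements of $\Ms$, not of $\Cs$ --- fails. The constructed object is therefore not a Penon-Kachour contraction at all, so it cannot be the free one, no matter how smoothly the factorization property goes through. The paper avoids this by building the degeneracy into the definition of $\kappa$ itself: the fresh cells $[x,d,y]^{n+1}_D$ are adjoined only for off-diagonal pairs $x\neq y$ with $\pi^n(x)=\pi^n(y)$, and one sets $\kappa^{n+1}_{D,d}(x,x):=\iota^{n+1}_{D,d}(x)$ outright. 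The same misplacement already bites at level $1$: since $\pi^0$ is bijective, the paper adds no contraction $1$-cells whatsoever and takes $\Ms^\kappa(\Qs)^1=\Ms(\Qs)^1$, whereas your scheme would adjoin redundant diagonal cells $\kappa^1(x,x)$ that survive in the magma.

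By contrast, the other half of your congruence enlargement is correctly placed, and is in fact a point where you are more explicit than the paper: the axiom $\pi^{n+1}(\kappa^{n+1}_{D,d}(x,y))=\iota^{n+1}(\pi^{n}(x))$ is a statement about $\pi$-images, i.e.\ about classes in $\Cs$, so the pairs $\left(\kappa^{n+1}_{D,d}(x,y),\,\iota^{n+1}_{D,d}(x)\right)$ genuinely belong in the generating relation $\Xs^{n+1}$. The paper only says that $\Xs^{n+1}$ consists of ``the same type of pairs'' as in equation~\eqref{eq: X} over the enlarged term set, and your explicit inclusion of these contraction-compatibility pairs is the right reading of what is needed there. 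Once you move the degeneracy condition out of the congruence and into the definition of $\kappa$ (restricting the fresh generators to off-diagonal pairs), your construction and verification coincide with the paper's proof.
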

\begin{proof}
Starting with a cubical $\omega$-set $\Qs$, we will recursively construct a free self-dual cubical Penon-Kachour contraction $(\Ms^\kappa(\Qs)\xrightarrow{\pi}\Cs^\kappa(\Qs),\kappa)$ over $\Qs$. Notice that the self-dual relfective cubical $\omega$-magma $\Ms^\kappa(\Qs)$ and the involutive cubical $\omega$-category $\Cs^\kappa(\Qs)$ differ from the free cubical $\omega$-magma $\Ms(\Qs)$ and the free involutive cubical $\omega$-category $\Cs(\Qs)$ already introduced in lemmata~\ref{lem: free-magma} and~\ref{lem: free-cat}, since further ``free-contraction $n$-cells'' (and consequently further congruence terms) are introduced at every level $n\in\NN$ of the procedure. 

\medskip 

For $n=0$, we define $\Ms^\kappa(\Qs)^0:=\Qs^0$; we consider the empty relation $\Xs^0:=\varnothing\subset\Qs^0\times\Qs^0$ and its generated equivalence relation  $\Rs_\Xs^0=\Delta_{\Qs^0}$ (the identity equivalence relation in $\Qs^0$), obtaining $\Cs^\kappa(\Qs)^0:=\Ms^\kappa(\Qs)^0/\Rs_\Xs^0$ and the bijective quotient map $\Ms^\kappa(\Qs)^0\xrightarrow{\pi^0}\Cs^\kappa(\Qs)^0$. 
There are no object-valued free-contractions in $\Ms^\kappa(\Qs)^0$. 
The structural inclusion $\Qs^0\xrightarrow{\eta^0}\Ms^\kappa(\Qs)^0$ is just the identity map. 

\medskip 

Passing now to the case $n=1$, in principle, we should modify the construction in lemma~\ref{lem: free-magma} of the ``level-1'' free self-dual reflective magma $\Ms(\Qs)^1$, introducing as input (for the arbitrary composition of self-dualities and concatenations) not only all the 1-cells in $\Qs^1$ and the free identities $\cup_{d\in\NN_0}d(\Qs^0)$, but also the free 1-cells $\kappa^1(\pi^0)$ coming from the contractions induced by the map $\pi^0$. 

\medskip 

Since $\pi^0$ is bijective, we have $\Ms^\kappa(\Qs)(\pi)^0:=\{(x,y)\in\Ms^\kappa(\Qs)^0\times\Ms^\kappa(\Qs)^0\ | \ \pi^0(x)=\pi^0(y)\}=\Delta_{\Qs^0}$ and hence, from the last axiom in the definition of cubical Penon-Kachour contraction $\kappa^1_{\varnothing,d}:\Ms^\kappa(\Qs)^0\to\Ms^\kappa(\Qs)^1$, we obtain  $\kappa^1_{\varnothing,d}(x,y)=\iota^1_{\varnothing,d}(x)=\iota^1_{\varnothing,d}(y)\in d(\Qs^0)$, for all $(x,y)\in\Ms^\kappa(\Qs)(\pi)^0$ and all $d\in\NN_0$. Hence, in the case $n=1$ the free-contraction cells are coinciding with the already defined free level-1 identities in $\Ms(\Qs)^1$. 
Hence we simply define $\Ms^\kappa(\Qs)^1:=\Ms(\Qs)^1$ and, taking $\Rs_{\Xs}^1$ as the equivalence relation in $\Ms(\Qs)^1$ generated by all the ``axioms'' $\Xs^1$ listed in the equations~\eqref{eq: X}, we define $\Cs^\kappa(\Qs)^1:=\Cs(\Qs)^1:=\Ms(\Qs)^1/\Rs_\Xs^1$ with $\Ms^\kappa(\Qs)^1\xrightarrow{\pi^1}\Cs^\kappa(\Qs)^1$ the quotient map and contraction $\kappa^1:\Ms^\kappa(\Qs)(\pi)^0\to \Ms^\kappa(\Qs)^1$ as $\kappa^1_{\varnothing,d}(x,y):=\iota^1_{\varnothing,d}(x)=\iota^1_{\varnothing,d}(y)$, for all $d\in\NN_0$. Finally we also define the structural free-inclusion $\Qs^1\xrightarrow{\eta^1}\Ms^\kappa(\Qs)^1=\Ms(\Qs)^1$ as in lemma~\ref{lem: free-magma}. 

\medskip 

Suppose now, by recursion, that we already constructed, for a given $n\in \NN$, a morphism of self-dual reflective cubical $n$-magmas $\Ms^\kappa(\Qs)^n\xrightarrow{\pi^n}\Cs^\kappa(\Qs)^n$ onto the involutive cubical $n$-category $\Cs^\kappa(\Qs)^n$, with cubical Penon-Kachour contraction $\Ms^\kappa(\Qs)^{n-1}(\pi^n)\xrightarrow{\kappa^n}\Ms^\kappa(\Qs)^n$ and with structural morphism of cubical $n$-sets $\Qs^n\xrightarrow{\eta^n}\Ms^\kappa(\Qs)^n$. 

The projection $\pi^n$ determines the domain set $\Ms^\kappa(\Qs)(\pi)^n:=\{(x,y)\in\Ms^\kappa(\Qs)^n\times\Ms^\kappa(\Qs)^n \ | \ \pi^n(x)=\pi^n(y)\}$ of the free-contraction $\kappa^{n+1}$. We consider, as in lemma~\ref{lem: free-magma}, the $(n+1)$-cells $\Qs^{n+1}_D\cup\left(\bigcup_{d\in D} d(\Qs^n_{D-\{d\}})\right)$ (containing already the ``freely generated'' $(n+1)$-identities) and we further add the ``freely-generated'' $(n+1)$-contractions 
$\kappa_{D,d}(\Qs^n):=\left\{[x,d,y]^{n+1}_D \ | \ (x,y)\in \Ms^\kappa(\Qs)^n_{D-\{d\}}\times \Ms^\kappa(\Qs)^n_{D-\{d\}}, \ x\neq y,\  \pi^n_{D-\{d\}}(x)=\pi^n_{D-\{d\}}(y)\right\}$, for all $D\subset \NN_0$ with $|D|=n+1$ and $d\in D$. 
In this way, we introduce $\Ms^\kappa(\Qs)^{n+1}_D[0]^0:=\Qs^{n+1}_D\cup\left(\bigcup_{d\in D} d(\Qs^n_{D-\{d\}})\right)\cup\left(\bigcup_{d\in D} \kappa_{D,d}(\Qs^n_{\phantom{D}})\right)$, extending the definition of sources and targets to the extra free-contractions as required by the axioms of Penon-Kachour contraction: 
$s^n_{D,d}([x,d,y]^{n+1}_D):=x$, $t^n_{D,d}([x,d,y]^{n+1}_D):=y$ and, for all $e\in D$ with $e\neq d$,  $s^n_{D,e}([x,d,y]^{n+1}_D):=\kappa^n_{D-\{e\},d}(s^{n-1}_{D-\{d\},e}(x),s^{n-1}_{D-\{d\},e}(y))$, 
and 
$t^n_{D,e}([x,d,y]^{n+1}_D):=\kappa^n_{D-\{e\},d}(t^{n-1}_{D-\{d\},e}(x),t^{n-1}_{D-\{d\},e}(y))$. The Penon-Kachour contraction is defined as $\kappa^{n+1}_{D,d}(x,y):=[x,d,y]^{n+1_D}$, for all $(x,y)\in \Ms^\kappa(\Qs)(\pi)^n$ with $x\neq y$ and by $\kappa^{n+1}_{D,d}(x,y):=\iota^{n+1}_{D,d}(x)=\iota^{n+1}_{D,d}(y)$, whenever $x=y$. 

The iterative construction of the sets $\Ms^\kappa(\Qs)^{n+1}[k]^j$ and $\Ms^\kappa(\Qs)^{n+1},$ and its  nullary, unary and binary operations as cubical $(n+1)$-magma, proceeds at this point exactly as in lemma~\ref{lem: free-magma}; similarly, the new binary relation $\Xs^{n+1}\subset \Ms^\kappa(\Qs)^{n+1}\times\Ms^\kappa(\Qs)^{n+1}$ is obtained using the same type of pairs, as in equation~\eqref{eq: X}, but with terms from the bigger set $\Ms^\kappa(\Qs)^{n+1}$; furthermore we set $\Cs^\kappa(\Qs)^{n+1}:=\Ms^\kappa(\Qs)^{n+1}/\Rs_\Xs^{n+1}$, where $\Rs_\Xs^{n+1}$ is the congruence relation generated by $\Xs^{n+1}$ in the cubical $(n+1)$-magma $\Ms^\kappa(\Qs)^{n+1}$ and with  $\Ms^\kappa(\Qs)^{n+1}\xrightarrow{\pi^{n+1}}\Cs^\kappa(\Qs)^{n+1}$ we denote the quotient map into the cubical involutive $(n+1)$-category $\Cs^\kappa(\Qs)^{n+1}$. 

\medskip 

Now that the recursive construction of the cubical Penon-Kachour contraction $(\Ms^\kappa(\Qs)\xrightarrow{\pi}\Cs^\kappa(\Qs),\kappa)$ has been completed, we only need to show that it satisfies the universal factorization property. 

For any morphism $\Qs\xrightarrow{\phi}\hat{\Ms}$ of cubical $\omega$-magmas into the cubical $\omega$-magma $\hat{\Ms}$ of another cubical Penon-Kachour contraction $(\hat{\Ms}\xrightarrow{\hat{\pi}}\hat{\Cs},\hat{\kappa})$, we need to show the existence of a unique morphism of Penon-Kachour contractions $(\Ms^\kappa(\Qs)\xrightarrow{\pi}\Cs^\kappa(\Qs),\kappa)\xrightarrow{(\hat{\phi},\hat{\Phi})}(\hat{\Ms}\xrightarrow{\hat{\pi}}\hat{\Cs},\hat{\kappa})$ such that $\hat{\Phi}\circ\kappa=\hat{\kappa}\circ(\phi,\phi)$. 

Since $\hat{\Phi}$ is already fixed as $\Phi(\eta(x)):=\phi(x)$ on $\eta(\Qs)\subset\Ms^\kappa(\Qs)$, and since $\hat{\Phi}$ must be a morphism of cubical self-dual reflective $\omega$-magmas compatible with the contractions $\hat{\Phi}([x,d,y]^{n+1}_D)=\hat{\kappa}^{n+1}_{D,d}(\phi^n(x),\phi^n(y))$; we see that $\hat{\Phi}^{n+1}$ is uniquely determined inductively by $\hat{\Phi}^n$ and $\phi^{n+1}$, for all $n\in\NN$.

The existence of the unique morphism $\Cs^\kappa(\Qs)\xrightarrow{\hat{\phi}}\hat{\Cs}$ of involutive cubical $\omega$-categories such that $\hat{\pi}\circ\hat{\Phi}=\hat{\phi}\circ\pi$ follows immediately from the fact that the kernel relation of $\hat{\pi}\circ\hat{\Phi}$ is a congruence of cubical $\omega$-magma in $\Ms^\kappa(\Qs)$ containing the set $\Xs$ and hence its generated congruence $\Rs_\Xs$, so that there exists a unique well-defined involutive functor  $\Cs^\kappa(\Qs)\xrightarrow{\hat{\phi}}\hat{\Cs}$ given by $\hat{\phi}([x]_{\Rs_\Xs}):=\hat{\pi}(\Phi(x))$, fo all $x\in\Ms^\kappa(\Qs)$. 
\end{proof}

\begin{theorem}\label{th: weak}
There is an adjunction $\xymatrix{\Qf\rtwocell^{F}_{U}{'}&\Kf}$, $F\dashv U$ between the category of morphisms of cubical $\omega$-sets and the category of morphisms of contractions of cubical reflective (self-dual) $\omega$-magmas, where $U$ is the forgetful functor associating to every contraction $(\Ms\xrightarrow{\pi}\Cs,\kappa)$ the underlying cubical $\omega$-set of $\Ms$ and $F$ associates to every cubical $\omega$-set $\Qs$ the free contraction as constructed above in lemma~\ref{lem: contraction}.
\end{theorem}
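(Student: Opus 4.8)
The plan is to derive the adjunction formally from the object-wise universal property already secured in lemma~\ref{lem: contraction}. Recall the standard categorical fact that a functor $U\colon\Kf\to\Qf$ admits a left adjoint precisely when, for every object $\Qs$ of $\Qf$, there is a universal arrow from $\Qs$ to $U$; lemma~\ref{lem: contraction} provides exactly such a universal arrow, namely the structural inclusion $\eta_\Qs\colon\Qs\to U(F(\Qs))$ into the underlying cubical $\omega$-set of the free contraction $F(\Qs):=(\Ms^\kappa(\Qs)\xrightarrow{\pi}\Cs^\kappa(\Qs),\kappa)$. It therefore only remains to verify that $U$ is a functor and to assemble $F$ together with the hom-set bijection out of this universal property.

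First I would check that $U$ is a functor $\Kf\to\Qf$. On objects it sends a contraction $(\Ms\xrightarrow{\pi}\Cs,\kappa)$ to the underlying cubical $\omega$-set of $\Ms$; on a morphism of contractions $(\phi,\Phi)$, as in definition~\ref{def: contraction}, it returns the underlying morphism of cubical $\omega$-sets of the magma morphism $\Phi\colon\Ms\to\hat\Ms$. Since the forgetful passage from self-dual reflective cubical $\omega$-magmas to cubical $\omega$-sets evidently preserves identities and composition, functoriality of $U$ is immediate.

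Next I would define $F$ on morphisms. Given a morphism $f\colon\Qs\to\hat\Qs$ of cubical $\omega$-sets, the composite $\eta_{\hat\Qs}\circ f\colon\Qs\to U(F(\hat\Qs))$ is a morphism of cubical $\omega$-sets into the underlying $\omega$-set of the contraction $F(\hat\Qs)$. Applying the universal factorization property of lemma~\ref{lem: contraction} to the source $\Qs$, there is a unique morphism of self-dual Penon-Kachour contractions $F(f)\colon F(\Qs)\to F(\hat\Qs)$ with $U(F(f))\circ\eta_\Qs=\eta_{\hat\Qs}\circ f$. Uniqueness then forces $F(\id_\Qs)=\id_{F(\Qs)}$ and $F(g\circ f)=F(g)\circ F(f)$, so $F$ is a functor; the same defining equation is precisely the naturality square for the unit $\eta$.

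Finally I would exhibit the adjunction bijection. For a contraction $X$ and a cubical $\omega$-set $\Qs$, the assignment $(\phi,\Phi)\mapsto U(\phi,\Phi)\circ\eta_\Qs$ defines a map $\Hom_\Kf(F(\Qs),X)\to\Hom_\Qf(\Qs,U(X))$; the existence clause of lemma~\ref{lem: contraction} gives surjectivity and the uniqueness clause gives injectivity, so it is a bijection, and its naturality in both $\Qs$ and $X$ follows once more from uniqueness. This is exactly the datum of an adjunction $F\dashv U$. I expect no serious obstacle here: the statement is a soft consequence of lemma~\ref{lem: contraction}, and the only point requiring genuine care is to confirm that the universal factorization property stated there is a bona fide universal arrow, i.e. that it holds against all target contractions and with the uniqueness of the factoring morphism, so that the ``universal arrows yield a left adjoint'' principle applies verbatim.
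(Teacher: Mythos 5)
Your proposal is correct and follows essentially the same route as the paper: the paper's proof simply invokes the standard fact that the object-wise universal factorization property established in lemma~\ref{lem: contraction} yields a left adjoint (citing \cite[section~2.3 and theorem~2.3.6]{Lei14}), which is exactly the ``universal arrows give an adjunction'' principle you unwind explicitly. Your additional verifications (functoriality of $U$, definition of $F$ on morphisms via uniqueness, and naturality of the hom-set bijection) are precisely the details that citation encapsulates, so there is no divergence in substance.
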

\begin{proof}
The existence of a left adjoint functor $F$ and an adjunction $F\dashv U$ is a standard consequence of the already proved universal factorization property for the free Penon-Kachour contraction over cubical $\omega$-sets (see for example~\cite[section~2.3 and theorem~2.3.6]{Lei14}). 
\end{proof}

As a consequence of the existence of any adjunction $F\dashv U$, with unit $\eta$ and counit $\epsilon$, we have an associated monad $(U\circ F,\eta,F\circ \epsilon\circ U)$, where the unit $\eta$ of the adjunction takes the role of the monadic unit for the monad endofunctor $U\circ F$ and the monadic multiplication $F\circ \epsilon\circ U$ is obtained from the co-unit $\epsilon$ of the adjunction (see for example~\cite[section~5.1 and lemma~5.1.3]{Ri16}). 

\medskip 

After all this preliminary work, we finally arrive at our definition of involutive weak cubical $\omega$-category. 
\begin{definition}\label{def: weak}
An \emph{involutive weak cubical $\omega$-category} is an algebra for the monad $U\circ F$ associated to the adjunction $F\dashv U$. 
\end{definition}

\subsection{Examples}\label{ex: weak}

Every weak cubical $\omega$-groupoid as already studied in~\cite{Ka22} becomes an example of weak involutive $\omega$-category, simply considering as involutions of $n$-arrows the ``directional inverses'' of the cubical $n$-arrows. 

\medskip 

As a notable special example of weak cubical $\omega$-groupoid, we can consider the weak $\omega$-groupoid of homotopies (without fixed extrema) of a topological space. 

\medskip 

Every strict involutive cubical $\omega$-category is of course an example of weak involutive cubical $\omega$-category. 

\medskip

Also in this trivial strict case, the specific definition of cubical $\omega$-sets that we have adopted in the present paper is sufficiently general to allow the usage of different classes $\Qs^n_D$, depending on the choice of the \hbox{``$n$-direction'' $D$}: for example a countable family of involutive 1-categories $(\Cs_n,s_n,t_n,\circ_n,\iota_n ,*_n)$, $n\in\NN_0$, produces a \emph{product strict involutive cubical $\omega$-category} $\Ds:=\Pi_{n\in\NN_0}\Cs_n$ specified as follows: 

\begin{itemize}
\item 
for all $n\in \NN$ and $D\subset \NN_0$ with $|D|=n$, we define: 
\begin{equation*}
\Ds^n_D:=\left\{
(x_j)_{j\in\NN_0} \ | \ \forall j\in D \st x_j\in \Cs^1_j, \ \forall j\notin D \st x_j\in \Cs^0_j
\right\},
\end{equation*}
\item 
for all $n\in \NN_0$, for all $D\subset \NN_0$ with $|D|=n$ and $d\in D$, sources and targets are defined by:
\begin{gather*}
s^{n-1}_{D,d}: (x_j)_{j\in\NN_0} 
\mapsto (\hat{x}_j)_{j\in\NN_0}, \quad \text{where} \quad 
\hat{x}_j:=
\begin{cases}
x_j\phantom{s_d()} \quad j\neq d, 
\\ 
s_d(x_j) \quad j=d, 
\end{cases}
\\
t^{n-1}_{D,d}: (x_j)_{j\in\NN_0} 
\mapsto (\tilde{x}_j)_{j\in\NN_0}, \quad \text{where} \quad 
\tilde{x}_j:=
\begin{cases}
x_j\phantom{t_d()} \quad j\neq d, 
\\ 
t_d(x_j) \quad j=d,
\end{cases}
\end{gather*}
\item 
for all $n\in \NN$, $D\subset \NN_0$ with $|D|=n$ and $d\in D$, identities are given by: 
\begin{equation*}
\iota^n_{D,d}: (x_j)_{j\in\NN_0}\mapsto 
(\bar{x}_j)_{j\in\NN_0}, \quad \text{where} \quad 
\bar{x}_j:=\begin{cases}
x_j\phantom{\iota_d()} \quad j\neq d, 
\\ 
\iota_d(x_j) \quad j=d,
\end{cases}
\end{equation*}
\item 
for all $n\in\NN_0$, $D\subset \NN_0$ with $|D|=n$, $d\in D$ composition are defined via: 
\begin{equation*}
(x_j)_{j\in\NN_0}\circ^n_{D,d} (y_j)_{j\in\NN_0}:=(z_j)_{j\in\NN_0}, \quad \text{where} \quad 
z_j:=\begin{cases}
x_j=y_j \quad j\neq d, 
\\
x_j\circ_dy_j \quad j=d, 
\end{cases}
\end{equation*}
\item 
for all $n\in \NN_0$, $D\subset \NN_0$ with $|D|=n$, $d\in D$, involutions are provided by: 
\begin{equation*}
((x_j)_{j\in\NN_0})^{*^n_{D,d}}:=(w_j)_{j\in\NN_0}, \quad \text{where}\quad 
w_j:=\begin{cases}
x_j\phantom{^{*_d}} \quad j\neq d, 
\\
x_j^{*_d} \quad j=d.
\end{cases}
\end{equation*}
\end{itemize}

\medskip 

Whenever we substitute the sequence of strict involutive 1-categories above, with a sequence of weak involutive 1-categories, one immediately obtains some non-trivial examples of weak involutive cubical $\omega$-categories (for example using as morphisms bimodules over different pairs of involutive monoids). 

\medskip 

Making full use of the material on involutions of multimodules recently developed in~\cite{BeCoPu22}, one can immediately obtain weak cubical involutive $\omega$-categories, that are analogs of the example of product cubical $\omega$-categories, by considering a family $\Os$ of objects consisting of involutive monoids and $n$-arrows in the direction $D$ as left-$D$-right-$D$-multimodules between finite families (with cardinality $D$) of the monoids in $\Os$; the compositions in the direction $d$ will consists in tensor products of multimodules over a single monoid in position $d$ and involutions will consist in duals of multimodules with respect to the involutive monoids in position $d$. 

\medskip

Interestingly, the previous ``product'' examples of strict/weak involutive cubical $\omega$-categories suggests an immediate generalization of the formalism of higher categories to the case $(\Cs_\gamma)_\Gamma$ of indexes labeled by well-ordered sets $\Gamma$ of arbitrary cardinality (beyond the countable case $\NN$); we will not pursue here such directions. 

\medskip 

A similar cubical product strict/weak involutive $\omega$-category, can actually be defined for any (countable) family of strict/weak globular involutive $n$-categories simply taking sequences $(x_j)_{j\in\NN_0}$ of globular $n$-cells. 

\medskip 

More interesting examples can be obtained considering ``higher multimodules'' as in this inductive construction: 
\begin{itemize}
\item 
as objects ($n=0$), we consider involutive monoids (or more generally involutive 1-categories) $\As,\Bs,\dots$, 
\item 
as 1-morphisms, we take all the bimodules ${}_\As\Ms_\Bs$ over the already defined objects: compositions will be the usual tensor products of bimodules ${}_\As\Ms_\Bs\otimes_{\Bs} {}_\Bs\Ns_\Cs$ and involution of 1-morphisms will be the usual notion of contragradient bimodule ${}_\Bs\hat{\Ms}_\As$, 
\item 
given a 1-morphism bimodule ${}_\As\Ms_\Bs$, and its contragradient ${}_\Bs\hat{\Ms}_\As$, one constructs their generated free involutive category $\As(\Ms)^{[1]}$ with two objects $\As,\Bs$,   
\item 
one iterates the construction with the above generated involutive categories $\As^{[1]}, \Bs^{[1]},\dots$, in place of the original involutive monoids, obtaining bimodules of level-2 and so on, $\dots$,
\item 
given a square (not necessarily commutative) diagram of the level-1 bimodules, cubical 2-arrows can be defined as level-2 multimodules over the pairs of level-1 bimodules of the diagram,
\item 
proceeding recursively, given an $n$-dimensional cubical diagram of level-$(n-1)$ multimodules, one can introduce $n$-arrows as level-$n$ multimodules with $n$-source and $n$-targets consisting of the level-$(n-1)$ multimodules appearing in the diagram,
\item
the operations of composition are iterated as tensor products of level-$n$ multimodules over the involutive categories generated by level-$(n-1)$ multimodules and involutions are provided by the controgradient construction.
\end{itemize} 

\section{Outlook}\label{sec: outlook}

The present paper is only a starting point in the study of involutions suitable for the definition of operator algebraic structures in the weak infinite vertically categorified (cubical) case (see the introduction of~\cite{BCLS20} for motivations). 

\medskip 

It might be of interest to try to formulate a similar definition of weak involutive cubical $\omega$-category using M.Batanin and T.Leinster's operadic techniques, as already done for the globular case in~\cite{BeBe23}.  

\medskip 

A more ambitious future goal will be the exploration of equivalences between weak globular involutive \hbox{$\omega$-cat}\-e\-gories in~\cite{BeBe17} and the present weak cubical involutive $\omega$-categories, extending to the involutive weak category case famous results in~\cite{ABS02}. In this direction, one must first generalize to the strict $\omega$-category environment the (already quite involved) results obtained for strict involutive double categories and strict involutive globular 2-categories in~\cite{BCDM14}. 


\bigskip 

\emph{Notes and Acknowledgments:} 
P.Bertozzini thanks Starbucks Coffee (Langsuan, Jasmine City, Gaysorn Plaza, Emquartier Sky Garden) where he spent most of the time dedicated to this research project; he thanks Fiorentino Conte of ``The Melting Clock'' for the great hospitality during many crucial on-line dinner-time meetings. 

{\small

}

\end{document}